\theoremstyle{plain}%
  \newtheorem{theorem}{Theorem}[section]
  \newtheorem{corollary}[theorem]{Corollary}
  \newtheorem{proposition}[theorem]{Proposition}
\theoremstyle{remark}
\theoremstyle{definition}
\newtheorem*{thmA}{Theorem A}
\newtheorem*{thmB}{Theorem B}
\begin{document}
\renewcommand{\theequation}{\arabic{equation}}
\begin{frontmatter}

\title{Some special Euler sums and $\zeta^\star(r+2,\{2\}^n)$}

\author{Kwang-Wu Chen}
\address{Department of Mathematics, University of Taipei}
\address{No. $1$,  Ai-Guo West Road, Taipei $10048$, Taiwan}
\ead{kwchen@uTaipei.edu.tw}

\author{Minking Eie}
\address{Department of Mathematics, National Chung Cheng University}
\address{$168$ University Rd., Minhsiung, Chiayi $62102$, Taiwan}
\ead{minking@math.ccu.edu.tw}
\date{\today}

\begin{abstract}
\setlength{\baselineskip}{14pt}
In this paper, we investigate the Euler sums
$$
G_{n+2}(p,q)=\sum_{1\leq k_1<k_2<\cdots<k_{p+1}}\frac1{k_1k_2\cdots k_pk_{p+1}^{n+2}}
\sum_{1\leq\ell_1\leq\ell_2\leq\cdots\leq\ell_q\leq k_{p+1}}\frac1{\ell_1\ell_2\cdots\ell_q}.
$$
We give another two representations, a reflection formula, and some other properties.
Then we use these results to calculate $\zeta^\star(r+2,\{2\}^n)$, for $r=0,1,2$,
as our applications.
\end{abstract}

\begin{keyword}
Euler sums \sep multiple zeta values \sep multiple zeta star values
\MSC[2010] 11M32 \sep 11M41 \sep 33B15
\end{keyword}
\end{frontmatter}

\setlength{\baselineskip}{18pt}
\section{Introduction}
The multiple zeta values (MZVs) and the multiple zeta-star values (MZSVs) are defined by 
\cite{Hoff0, IKOO, Mun, Zag0}
$$
\zeta(\alpha_1,\alpha_2,\ldots,\alpha_r) = \sum_{1\leq k_1<k_2<\cdots<k_r}
k_1^{-\alpha_1}k_2^{-\alpha_2}\cdots k_r^{-\alpha_r}
$$
and
$$
\zeta^\star(\alpha_1,\alpha_2,\ldots,\alpha_r) 
= \sum_{1\leq k_1\leq k_2\leq\cdots\leq k_r}
k_1^{-\alpha_1}k_2^{-\alpha_2}\cdots k_r^{-\alpha_r}
$$
with positive integers $\alpha_1,\alpha_2,\ldots,\alpha_r$ 
and $\alpha_r\geq 2$ for the sake of convergence. 
The numbers $r$ and $|\bm\alpha|=\alpha_1+\alpha_2+\cdots+\alpha_r$
are the depth and weight of $\zeta(\bm\alpha)$. For our convenience,
we let $\{a\}^k$ be $k$ repetitions of $a$, for example, 
$\zeta(\{2\}^3)=\zeta(2,2,2)$.

MZVs of length one and two were already known to Euler. 
A systematic study of MZVs began in the early
1990s with the works of Hoffman \cite{Hoff0} and Zagier \cite{Zag0}. 
Thereafter these numbers have emerged in several mathematical 
areas including algebraic geometry, Lie
group theory, advanced algebra, and combinatorics. 
See \cite{EieB, Zhao} for introductory reviews.

The finite MZVs and MZSVs are defined as follows.
\begin{eqnarray*}
\zeta_n(\alpha_1,\alpha_2,\ldots,\alpha_r)&=&
\sum_{1\leq k_1<k_2<\cdots<k_r\leq n}
k_1^{-\alpha_1}k_2^{-\alpha_2}\cdots k_r^{-\alpha_r},\\
\zeta^\star_n(\alpha_1,\alpha_2,\ldots,\alpha_r)
&=&\sum_{1\leq k_1\leq k_2\leq\cdots\leq k_r\leq n}
k_1^{-\alpha_1}k_2^{-\alpha_2}\cdots k_r^{-\alpha_r}.
\end{eqnarray*}
The generalized harmonic numbers $H_n^{(s)}$ of order $s$ which are defined by 
$H_n^{(s)}=\sum^n_{j=1}j^{-s}$. It is known that (ref. \cite[Lemma 1]{Chen})
\begin{eqnarray*}
\zeta_n(\{1\}^m) &=& P_m(H_n^{(1)},-H_n^{(2)},\ldots,(-1)^{m+1}H_n^{(m)}),\\
\zeta_n^\star(\{1\}^m) &=& P_m(H_n^{(1)},H_n^{(2)},\ldots,H_n^{(m)}),
\end{eqnarray*}
where the modified Bell polynomials $P_m(x_1,x_2,\ldots,x_m)$ are defined by \cite{Chen, CC1, CC2}
$$
\exp\left(\sum^\infty_{k=1}\frac{x_k}{k}z^k\right)
=\sum^\infty_{m=0}P_m(x_1,x_2,\ldots,x_m)z^m.
$$
Recently Choi \cite{Choi} and Hoffman \cite{Hoff} investigated some special cases of 
the following Euler sums
$$
\sum_{m=1}^\infty\frac{\zeta_m(\{1\}^p)\zeta^\star_m(\{1\}^q)}
{m^{\alpha_1}(m+1)^{\alpha_2}\cdots(m+r-1)^{\alpha_r}}.
$$
In this paper, we consider the little different Euler sums:
\begin{equation}\label{eq1.1} 
G_{n+2}(p,q)=\sum_{1\leq k_1<k_2<\cdots<k_{p+1}}\frac1{k_1k_2\cdots k_pk_{p+1}^{n+2}}
\sum_{1\leq\ell_1\leq\ell_2\leq\cdots\leq\ell_q\leq k_{p+1}}\frac1{\ell_1\ell_2\cdots\ell_q},
\end{equation}
where $n,p,q$ are nonnegative integers. 
It is noted that $G_{n+2}(p,0)=\zeta(\{1\}^p,n+2)$ and 
$G_{n+2}(0,q)=\zeta^\star(\{1\}^q,n+2)$. 

This Euler sums have the other two representations:
\begin{thmA}
\begin{eqnarray*}
G_{n+2}(p,q)&=&\sum^{p+q+1}_{r=p+1}{r-1\choose p}
\sum_{|\alpha|=p+q+1}\zeta(\alpha_1,\ldots,\alpha_r+n+1)\\
&=&\frac1{p!q!n!}\int_{E_2} 
\left(\log\frac1{1-t_1}\right)^p\left(\log\frac1{1-t_2}\right)^q
\left(\log\frac{t_2}{t_1}\right)^n\frac{dt_1dt_2}{(1-t_1)t_2}.
\end{eqnarray*}
\end{thmA}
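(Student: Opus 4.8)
\emph{Proof sketch.} I would prove the two representations independently. For the multiple-zeta-value sum, start from the observation that $G_{n+2}(p,q)$ is an absolutely convergent series of positive terms and hence may be rearranged at will. A generic term is indexed by a chain $1\le k_1<\cdots<k_{p+1}$ together with a size-$q$ multiset $\{\ell_1,\ldots,\ell_q\}\subseteq\{1,\ldots,k_{p+1}\}$, and it contributes $\prod_{i=1}^{r}v_i^{-\gamma_i}$, where $v_1<\cdots<v_r$ are the distinct integers occurring among the $k_i$ and the $\ell_j$ (so $v_r=k_{p+1}$) and $\gamma_i$ is the total multiplicity of $v_i$, with $k_{p+1}$ weighted by $n+2$. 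Setting $\alpha_i=\gamma_i$ for $i<r$ and $\alpha_r=\gamma_r-(n+1)$ gives $\alpha_i\ge1$ and $\alpha_1+\cdots+\alpha_r=p+q+1$. The decisive step is to count, for a fixed pattern $(v_1<\cdots<v_r;\,\alpha_1,\ldots,\alpha_r)$, the configurations realizing it: for $i<r$ the exponent $\alpha_i$ is a $0/1$ contribution from the $k$-chain plus the number of $\ell_j$ equal to $v_i$, while the number of $\ell_j$ equal to $v_r$ is $\alpha_r-1$, so once one records which $p$ of $v_1,\ldots,v_{r-1}$ lie on the $k$-chain the configuration is determined, and the requirement that the $\ell$-multiplicities sum to $q$ is automatic (the total weight being $p+q+n+2$). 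The count is therefore $\binom{r-1}{p}$, and summing over all patterns produces $\sum_{r}\binom{r-1}{p}\sum_{|\alpha|=p+q+1}\zeta(\alpha_1,\ldots,\alpha_r+n+1)$, with $r$ running from $p+1$ (where $\binom{r-1}{p}$ first becomes nonzero) to $p+q+1$ (the largest $r$ admitting a composition of $p+q+1$ into $r$ positive parts).

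For the integral representation I would first sum over $k_{p+1}=N$ to obtain $G_{n+2}(p,q)=\sum_{N\ge1}N^{-(n+2)}\zeta_{N-1}(\{1\}^p)\,\zeta_N^{\star}(\{1\}^q)$, and then combine three ingredients: $\sum_{N\ge1}\zeta_{N-1}(\{1\}^p)\,s^{N-1}=\frac1{p!}\bigl(\log\frac1{1-s}\bigr)^{p}\frac1{1-s}$, the elementary evaluation $\int_0^{t}s^{N-1}\bigl(\log\frac{t}{s}\bigr)^{n}\,ds=n!\,t^{N}/N^{n+1}$, and the identity
\[
\zeta_N^{\star}(\{1\}^q)=\frac{N}{q!}\int_0^1 t^{N-1}\Bigl(\log\frac1{1-t}\Bigr)^{q}\,dt .
\]
The first two, inserted into the $t_1$-integration, give $\frac1{p!\,n!}\int_0^{t_2}\bigl(\log\frac1{1-t_1}\bigr)^{p}\bigl(\log\frac{t_2}{t_1}\bigr)^{n}\frac{dt_1}{1-t_1}=\sum_{N\ge1}\zeta_{N-1}(\{1\}^p)N^{-(n+1)}t_2^{N}$; multiplying this by $\frac1{q!}\bigl(\log\frac1{1-t_2}\bigr)^{q}t_2^{-1}$, integrating over $t_2\in(0,1)$ (term-by-term integration being valid since every term is positive), and applying the displayed identity reproduces the series for $G_{n+2}(p,q)$, which is the claim. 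The displayed identity is in turn reduced, by the substitution $t\mapsto1-t$ and the binomial theorem, to $\sum_{i=1}^{N}(-1)^{i-1}\binom{N}{i}i^{-q}=\zeta_N^{\star}(\{1\}^q)$, and this follows by partial-fractioning the generating function $\prod_{m=1}^{N}(1-x/m)^{-1}=\sum_{q\ge0}\zeta_N^{\star}(\{1\}^q)x^{q}$ of the complete homogeneous symmetric polynomials in $1,\tfrac12,\ldots,\tfrac1N$.

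In each half there is exactly one point carrying real weight — the multiplicity count $\binom{r-1}{p}$ in the first, and the evaluation $\zeta_N^{\star}(\{1\}^q)=\frac{N}{q!}\int_0^1 t^{N-1}(\log\frac1{1-t})^{q}\,dt$ in the second — while everything around it is routine manipulation of positive series, beta-type integrals, and logarithmic generating functions. I would deliberately avoid trying to derive the $\zeta$-sum from the integral via an iterated-integral/shuffle computation: that route expresses $G_{n+2}(p,q)$ through multiple zeta values of the single depth $p+q+1$, and matching it to the depth-varying first formula requires the sum formula for multiple zeta values, so the two direct arguments above are the cleaner path.
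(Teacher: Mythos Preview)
Your proof is correct, and it takes a genuinely different route from the paper's.

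The paper first obtains the multiple-zeta-value sum by citing an external ``shuffle product relation'' for the product of a multiple zeta value and a multiple zeta-star value; it then \emph{derives the integral from that sum}, invoking a general formula (Proposition~6.5.1 of Eie's monograph) for $\sum_{|\alpha|=m}\zeta(\{1\}^p,\alpha_1,\ldots,\alpha_q+n)$ as a double integral, collapsing the resulting binomial sum, and finishing with the change of variables $\frac{1-t_1}{1-t_2}=\frac{1}{1-u_1}$, $\frac{1}{t_2}=\frac{u_2}{u_1}$. You instead establish the two representations independently from the definition: the $\binom{r-1}{p}$ count is exactly the stuffle merge spelled out combinatorially (so your first half is a self-contained proof of what the paper quotes), and for the integral you bypass the iterated-integral machinery entirely by summing on $k_{p+1}=N$ and combining the power-series identity $\sum_{N}\zeta_{N-1}(\{1\}^p)s^{N-1}=\frac{1}{p!}\frac{(-\log(1-s))^p}{1-s}$, the beta-type evaluation of $\int_0^{t}s^{N-1}(\log\tfrac{t}{s})^n\,ds$, and the identity $\zeta_N^{\star}(\{1\}^q)=\frac{N}{q!}\int_0^1 t^{N-1}(\log\tfrac{1}{1-t})^q\,dt$, which you reduce to the classical $\sum_{i=1}^N(-1)^{i-1}\binom{N}{i}i^{-q}=\zeta_N^{\star}(\{1\}^q)$ via partial fractions. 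Your approach is more elementary and fully self-contained; the paper's is shorter on the page but leans on two cited results, and in exchange it makes visible the passage from the $\zeta$-sum to the integral through a single change of variables, a connection your independent proofs do not display.
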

The function $G_{n+2}(p,q)$ has a reflection formula as follows.
\begin{thmB}
For a pair of positive integers $p$, $q$, and an integer $k\geq 0$,
we have
$$
G_{k+3}(p-1,q)+(-1)^kG_{k+3}(q-1,p)
=\sum_{a+b=k}(-1)^b\zeta(\{1\}^{p-1},a+2)\zeta(\{1\}^{q-1},b+2).
$$
\end{thmB}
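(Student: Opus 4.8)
\section*{Proof proposal for Theorem B}

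The plan is to work entirely with the nested‑sum definition \eqref{eq1.1} rather than with the integral of Theorem~A. First I would record two elementary rewritings (with the convention $\zeta_n(\{1\}^0)=\zeta^\star_n(\{1\}^0)=1$): splitting the outer sum in \eqref{eq1.1} off its largest index gives
$$G_{k+3}(p-1,q)=\sum_{m\ge1}\frac{\zeta_{m-1}(\{1\}^{p-1})\,\zeta^\star_m(\{1\}^q)}{m^{k+3}},\qquad \zeta(\{1\}^{p-1},a+2)=\sum_{m\ge1}\frac{\zeta_{m-1}(\{1\}^{p-1})}{m^{a+2}},$$
together with the analogous formulas with $p$ replaced by $q$. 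Every series that appears below has nonnegative terms, so all interchanges of summation are justified by Tonelli's theorem and no conditional‑convergence issue arises.

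The key step is the identity
$$\sum_{n\ge1}\frac{\zeta_{n-1}(\{1\}^{q-1})}{n(m+n)}=\frac{\zeta^\star_m(\{1\}^q)}{m}\qquad(m,q\ge1),$$
which I would prove by induction on $q$. For $q=1$ it is the telescoping evaluation $\sum_{n\ge1}\frac1{n(m+n)}=\frac1m\sum_{n\ge1}\bigl(\frac1n-\frac1{m+n}\bigr)=\frac{H_m}{m}$. For the inductive step, expand $\zeta_{n-1}(\{1\}^{q-1})=\sum_{1\le j_1<\cdots<j_{q-1}<n}(j_1\cdots j_{q-1})^{-1}$, carry out the $n$‑summation using $\sum_{n>N}\frac1{n(m+n)}=\frac1m(H_{N+m}-H_N)=\frac1m\sum_{d=1}^m\frac1{N+d}$, and interchange the resulting $d$‑sum with the sum over the $j$'s. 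Grouping what remains by $N:=j_{q-1}$ turns the inner sum into $\sum_{N\ge1}\zeta_{N-1}(\{1\}^{q-2})\bigl(N(N+d)\bigr)^{-1}$, which by the induction hypothesis equals $\zeta^\star_d(\{1\}^{q-1})/d$; hence the whole expression collapses to $\frac1m\sum_{d=1}^m\frac1d\zeta^\star_d(\{1\}^{q-1})=\frac1m\zeta^\star_m(\{1\}^q)$, the last equality being the decomposition of $\zeta^\star_m$ by its largest summand.

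Substituting this identity into the first formula of paragraph one gives the symmetric‑looking form $G_{k+3}(p-1,q)=\sum_{m,n\ge1}\zeta_{m-1}(\{1\}^{p-1})\zeta_{n-1}(\{1\}^{q-1})\bigl(m^{k+2}n(m+n)\bigr)^{-1}$, and relabelling $p\leftrightarrow q$ and $m\leftrightarrow n$ gives $G_{k+3}(q-1,p)=\sum_{m,n\ge1}\zeta_{m-1}(\{1\}^{p-1})\zeta_{n-1}(\{1\}^{q-1})\bigl(n^{k+2}m(m+n)\bigr)^{-1}$. Adding the two, the rational kernel becomes $\frac1{mn(m+n)}\bigl(m^{-(k+1)}+(-1)^kn^{-(k+1)}\bigr)$, and the finite geometric‑series identity $\sum_{a+b=k}(-1)^bm^{-a}n^{-b}=\frac1{m+n}\bigl(nm^{-k}+(-1)^kmn^{-k}\bigr)$ shows this equals $\sum_{a+b=k}(-1)^bm^{-(a+2)}n^{-(b+2)}$. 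Splitting the double sum accordingly and applying the second formula of paragraph one term by term reproduces $\sum_{a+b=k}(-1)^b\zeta(\{1\}^{p-1},a+2)\zeta(\{1\}^{q-1},b+2)$, which is the asserted right‑hand side.

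The only genuine obstacle is discovering the induction identity of the second paragraph: it is precisely the device that "unfolds" the harmonic weight $\zeta^\star_m(\{1\}^q)$ over a fresh summation index, thereby exposing the $p\leftrightarrow q$ symmetry that \eqref{eq1.1} conceals; everything after it is bookkeeping (checking the geometric‑series identity and tracking the positivity that legitimizes the rearrangements). One could instead obtain the same identity analytically from the generating function $\sum_{N\ge0}\zeta_N(\{1\}^{q-1})t^N=\frac{1}{(1-t)(q-1)!}\bigl(\log\frac1{1-t}\bigr)^{q-1}$, the beta‑type integral $\int_0^1u^{r-1}(\log\frac1u)^{q-1}\,du=(q-1)!/r^q$, and the classical evaluation $\zeta^\star_m(\{1\}^q)=\sum_{r=1}^m(-1)^{r-1}\binom{m}{r}r^{-q}$, but the combinatorial induction seems the cleanest route.
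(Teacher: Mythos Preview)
Your proof is correct and takes a genuinely different route from the paper. The paper proves Theorem~B through the integral representation of Theorem~A: it writes down the double integral
\[
\frac{1}{k!p!q!}\int_{0}^{1}\!\int_{0}^{1}
\Bigl(\log\tfrac{u}{t}\Bigr)^{k}
\Bigl(\log\tfrac{1}{1-t}\Bigr)^{p}
\Bigl(\log\tfrac{1}{1-u}\Bigr)^{q}
\frac{dt}{t}\,\frac{du}{u},
\]
obtains the right-hand side by expanding $(\log\tfrac{u}{t})^{k}=(\log\tfrac{1}{t}-\log\tfrac{1}{u})^{k}$ binomially, and obtains the left-hand side by splitting $[0,1]^{2}$ into the simplices $\{t<u\}$ and $\{u<t\}$ and integrating by parts in each piece to convert $\tfrac{dt}{t}$ into $\tfrac{dt}{1-t}$, thereby matching the integral form of $G_{k+3}$. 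You stay entirely on the series side: your induction lemma $\sum_{n\ge1}\zeta_{n-1}(\{1\}^{q-1})/(n(m+n))=\zeta^{\star}_{m}(\{1\}^{q})/m$ plays the role of that integration by parts, unfolding the star weight over a fresh index so that the hidden $p\leftrightarrow q$ symmetry becomes manifest, and the finite geometric identity for $\sum_{a+b=k}(-1)^{b}m^{-a}n^{-b}$ replaces the binomial expansion of the logarithm. Your argument has the merit of making Theorem~B logically independent of Theorem~A, and the induction lemma is a clean standalone fact; the paper's approach is shorter once the integral representation is already available and makes the reflection symmetry visually immediate via the interchange $t\leftrightarrow u$.
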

We evaluate $\zeta^\star(r+2,\{2\}^m)$, for $r=0,1$, and $2$,
as applications of $G_{n+2}(p,q)$.
We show that the generating function of $\zeta^\star(r+2,\{2\}^m)$ is 
$$
\sum^\infty_{k=1}\frac1{k^{r+2}}\frac{\Gamma(k+x)\Gamma(k-x)}{\Gamma(k)^2}.
$$
From this generating function we have
$$
\zeta^\star(r+2,\{2\}^m)=\sum_{p+q=2m\atop{a+b=r}}
(-1)^{q+b}{p+b\choose p}G_{q+2}(p+b,a).
$$
Then we get a sum formula
which was first appeared in \cite[Theorem 2]{IKOO}
and then Zagier regained it in \cite{Zag}.
$$
\sum_{a+b=n}\left(2+\delta_{0a}\right)\zeta^\star(\{2\}^a,3,\{2\}^b)=
2(2n+2)\left(1-\frac1{2^{2n+2}}\right)\zeta(2n+3).
$$

Our paper is organized as follows. In Section 2, we present some basic properties of 
$G_{n+2}(p,q)$. We investigate the generating function of $\zeta^\star(r+2,\{2\}^m)$
in Section 3. When we calculate the values of $G_{n+2}(p,q)$ we need some
sum formulas of MZVs and MZSVs of height one. We write these sum formulas in 
Section 4. In order to evalute $\zeta^\star(r+2,\{2\}^n)$
we calculate some sum formulas of $G_{n+2}(p,q)$ in Section 5.
In the last section we evaluate the values of $\zeta^\star(r+2,\{2\}^n)$, for $r=0,1,2$,
as our applications.

\section{Properties of $G_{n+2}(p,q)$}
In this section we will give some interesting properties of $G_{n+2}(p,q)$.
We need a proposition in \cite{EieB} to transform a sum of MVZs to
a integral representation.
\begin{proposition}\cite[Proposition 6.5.1]{EieB}\label{pro2.1}
\begin{eqnarray*}
\lefteqn{\sum_{|\alpha|=m}\zeta(\{1\}^p,\alpha_1,\alpha_2,\ldots,\alpha_q+n)
=\frac1{p!(q-1)!(m-q)!(n-1)!}}\\
&\times&\int_{E_2}
\left(\log\frac1{1-t_1}\right)^p\left(\log\frac{t_2}{t_1}\right)^{m-q}
\left(\log\frac{1-t_1}{1-t_2}\right)^{q-1}\left(\log\frac1{t_2}\right)^{n-1}
\frac{dt_1dt_2}{(1-t_1)t_2}.
\end{eqnarray*}
\end{proposition}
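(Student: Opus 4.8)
The natural route is through the iterated-integral representation of multiple zeta values. I would first recall that, writing $\omega_0(t)=dt/t$ and $\omega_1(t)=dt/(1-t)$, an admissible $\zeta(s_1,\dots,s_k)$ equals the integral over the simplex $0<t_1<\cdots<t_w<1$, with $w=s_1+\cdots+s_k$, of the word obtained by attaching to $s_1$ the block $\omega_1\omega_0^{\,s_1-1}$ nearest $t_1$, then the block for $s_2$, and so on, with the block for $s_k$ nearest $t_w$. Applied to $\zeta(\{1\}^p,\alpha_1,\dots,\alpha_{q-1},\alpha_q+n)$, whose weight is $w=p+|\alpha|+n$, the associated word is $\omega_1^{\,p}$ (contributed by the $p$ ones), followed by $\omega_1\omega_0^{\alpha_1-1}\cdots\omega_1\omega_0^{\alpha_q-1}$, followed by the tail $\omega_0^{\,n}$ coming from the $+n$ in the last slot.

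The combinatorial heart of the argument is the summation over all compositions $\alpha$ with $|\alpha|=m$. Since every $\alpha$-block begins with $\omega_1$, the words $\omega_1\omega_0^{\alpha_1-1}\cdots\omega_1\omega_0^{\alpha_q-1}$ are in bijection with the length-$m$ words in $\{\omega_0,\omega_1\}$ that start with $\omega_1$ and contain exactly $q$ letters $\omega_1$; deleting that leading $\omega_1$ identifies them with the set of \emph{all} length-$(m-1)$ words carrying $q-1$ letters $\omega_1$ and $m-q$ letters $\omega_0$. Consequently $\sum_{|\alpha|=m}\zeta(\{1\}^p,\alpha_1,\dots,\alpha_q+n)$ is the integral over $0<t_1<\cdots<t_w<1$ of $\omega_1^{\,p+1}\cdot W\cdot\omega_0^{\,n}$, where $W$ denotes the sum of all such middle words, occupying the $m-1$ variables strictly between the two variables that will survive the reduction.

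Next I would integrate out the homogeneous end-blocks. Keeping the topmost of the $p+1$ leading $\omega_1$-variables as $t_1$ and integrating the $p$ below it over $0<\cdots<t_1$ produces the factor $\tfrac1{p!}\bigl(\log\tfrac1{1-t_1}\bigr)^p$ while leaving the measure factor $dt_1/(1-t_1)$; symmetrically, keeping the bottommost of the $n$ trailing $\omega_0$-variables as $t_2$ and integrating the $n-1$ above it over $t_2<\cdots<1$ produces $\tfrac1{(n-1)!}\bigl(\log\tfrac1{t_2}\bigr)^{n-1}$ and leaves $dt_2/t_2$. For the remaining block I would invoke the shuffle relation $\bigl(\int_{t_1}^{t_2}\omega_1\bigr)^{q-1}\bigl(\int_{t_1}^{t_2}\omega_0\bigr)^{m-q}=(q-1)!\,(m-q)!\sum_W\int_{t_1<s_1<\cdots<s_{m-1}<t_2}(\text{word})$, so that integrating $W$ over the $m-1$ middle variables contributes $\tfrac1{(q-1)!(m-q)!}\bigl(\log\tfrac{1-t_1}{1-t_2}\bigr)^{q-1}\bigl(\log\tfrac{t_2}{t_1}\bigr)^{m-q}$. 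Multiplying the four factors and integrating over $E_2=\{0<t_1<t_2<1\}$ yields precisely the stated identity.

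The only genuine difficulty is bookkeeping: keeping the orientation of the iterated integral consistent throughout, checking the composition-to-word bijection, and applying the shuffle identity to the inhomogeneous middle block with the correct multinomial constants. I would also flag the standing hypotheses $p\ge 0$, $q\ge 1$, $n\ge 1$, $m\ge q$ implicit in the formula, which make all the factorials meaningful, guarantee $\alpha_q+n\ge 2$ so that each summand converges, and ensure the representation above applies.
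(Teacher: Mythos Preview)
Your argument is correct. The iterated-integral representation of $\zeta(\{1\}^p,\alpha_1,\dots,\alpha_q+n)$, the bijection between compositions $\alpha$ of $m$ into $q$ parts and length-$(m-1)$ words in $\{\omega_0,\omega_1\}$ with exactly $q-1$ letters $\omega_1$, and the shuffle identity
\[
\Bigl(\int_{t_1}^{t_2}\omega_1\Bigr)^{q-1}\Bigl(\int_{t_1}^{t_2}\omega_0\Bigr)^{m-q}
=(q-1)!\,(m-q)!\sum_{W}\int_{t_1<s_1<\cdots<s_{m-1}<t_2}W
\]
are all used correctly, and the bookkeeping for which two variables survive as $t_1,t_2$ and which factorials appear is accurate. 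Your remark about the implicit hypotheses $p\ge 0$, $q\ge 1$, $m\ge q$, $n\ge 1$ is also on point.

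As for comparison with the paper: the present paper does not prove this proposition at all; it is simply quoted from \cite[Proposition~6.5.1]{EieB} and used as a black box in the proof of Theorem~\ref{thm2.2}. Your iterated-integral-plus-shuffle derivation is the standard one and is essentially what appears in that reference, so there is no meaningful divergence to report.
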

The following we give two other representations of $G_{n+2}(p,q)$:
\begin{theorem}\label{thm2.2}
\begin{eqnarray}\label{eq2.2} 
G_{n+2}(p,q)&=&\sum^{p+q+1}_{r=p+1}{r-1\choose p}
\sum_{|\alpha|=p+q+1}\zeta(\alpha_1,\ldots,\alpha_r+n+1)\\
&=&\frac1{p!q!n!}\int_{E_2} \label{eq2.3} 
\left(\log\frac1{1-t_1}\right)^p\left(\log\frac1{1-t_2}\right)^q
\left(\log\frac{t_2}{t_1}\right)^n\frac{dt_1dt_2}{(1-t_1)t_2}.
\end{eqnarray}
\end{theorem}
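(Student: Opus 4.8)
I would prove the two displayed identities separately: \eqref{eq2.2} by a direct regrouping of the defining sum \eqref{eq1.1}, and \eqref{eq2.3} by feeding the inner sums of \eqref{eq2.2} into Proposition~\ref{pro2.1} and then performing a single change of variables on $E_2$.

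For \eqref{eq2.2}, put $k=k_{p+1}$, so that $G_{n+2}(p,q)=\sum_{k\geq1}k^{-(n+2)}\zeta_{k-1}(\{1\}^p)\zeta^\star_k(\{1\}^q)$, and expand this summand according to the set of \emph{distinct} values assumed by $k_1,\ldots,k_{p+1},\ell_1,\ldots,\ell_q$. Listing those distinct values as $v_1<v_2<\cdots<v_r$, I would record for each $i$ the indicator $\epsilon_i\in\{0,1\}$ of whether $v_i$ is one of the $k_j$ and the multiplicity $m_i\geq0$ of $v_i$ among the $\ell_j$; then $\epsilon_r=1$ (because $v_r=k_{p+1}$), $\sum_i\epsilon_i=p+1$, $\sum_i m_i=q$, and $\epsilon_i+m_i\geq1$ for every $i$. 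The corresponding term of $G_{n+2}(p,q)$ equals $v_1^{-(\epsilon_1+m_1)}\cdots v_{r-1}^{-(\epsilon_{r-1}+m_{r-1})}v_r^{-(n+2+m_r)}$, which is a general term of $\zeta(\alpha_1,\ldots,\alpha_{r-1},\alpha_r+n+1)$ for the composition $\alpha=(\epsilon_1+m_1,\ldots,\epsilon_{r-1}+m_{r-1},\,1+m_r)$ of $p+q+1$. Conversely, for a composition $\alpha$ of $p+q+1$ with $r$ parts (necessarily $p+1\leq r\leq p+q+1$), the data $(\epsilon,m)$ producing it are in bijection with the ways of choosing which $p$ of the first $r-1$ coordinates are among the $k$-indices (the last coordinate always is, and then $m_i=\alpha_i-\epsilon_i\geq0$ is forced); there are exactly $\binom{r-1}{p}$ of them, independently of $\alpha$. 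Summing over $r$ and $\alpha$ gives \eqref{eq2.2}.

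For \eqref{eq2.3}, I would apply Proposition~\ref{pro2.1} with an empty $\{1\}$-string to each inner sum of \eqref{eq2.2}; taken with depth $r$, total weight $p+q+1$, and last exponent shifted by $n+1$, it gives
$$\sum_{|\alpha|=p+q+1}\zeta(\alpha_1,\ldots,\alpha_r+n+1)=\frac1{(r-1)!\,(p+q+1-r)!\,n!}\int_{E_2}\left(\log\frac{t_2}{t_1}\right)^{p+q+1-r}\left(\log\frac{1-t_1}{1-t_2}\right)^{r-1}\left(\log\frac1{t_2}\right)^{n}\frac{dt_1dt_2}{(1-t_1)t_2},$$
where $\alpha$ runs over compositions of length $r$. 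Multiplying by $\binom{r-1}{p}$, using $\binom{r-1}{p}/(r-1)!=1/\bigl(p!\,(r-1-p)!\bigr)$, pulling the finite $r$-sum inside the integral and setting $s=r-1-p$, the binomial theorem collapses the polynomial in the logarithms to $\frac1{p!q!}\bigl(\log\frac{1-t_1}{1-t_2}\bigr)^p\bigl(\log\frac{t_2(1-t_1)}{t_1(1-t_2)}\bigr)^q$, so that
$$G_{n+2}(p,q)=\frac1{p!q!n!}\int_{E_2}\left(\log\frac{1-t_1}{1-t_2}\right)^p\left(\log\frac{t_2(1-t_1)}{t_1(1-t_2)}\right)^q\left(\log\frac1{t_2}\right)^n\frac{dt_1dt_2}{(1-t_1)t_2}.$$
Finally I would apply the substitution $t_1=\dfrac{u_1(1-u_2)}{u_2(1-u_1)}$, $t_2=\dfrac{u_1}{u_2}$ (with inverse $u_1=\dfrac{t_2-t_1}{1-t_1}$, $u_2=\dfrac{t_2-t_1}{t_2(1-t_1)}$), which maps $E_2$ bijectively onto itself and satisfies $\dfrac{1-t_1}{1-t_2}=\dfrac1{1-u_1}$, $\dfrac{t_2(1-t_1)}{t_1(1-t_2)}=\dfrac1{1-u_2}$, $\dfrac1{t_2}=\dfrac{u_2}{u_1}$; writing $\dfrac{dt_1dt_2}{(1-t_1)t_2}=d\bigl(\log\tfrac1{1-t_1}\bigr)\wedge d(\log t_2)$, a short Jacobian computation shows it carries this measure to $\dfrac{du_1du_2}{(1-u_1)u_2}$. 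Substituting all of this into the previous display produces exactly \eqref{eq2.3}.

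I expect the real work to lie in two bookkeeping points. For \eqref{eq2.2} it is checking that the passage to distinct values is a genuine bijection and that each composition is counted with multiplicity precisely $\binom{r-1}{p}$, independently of $\alpha$; the delicate part is the distinguished top value $v_r=k_{p+1}$, which always lies among the $k$-indices and carries the extra exponent $n+2$, together with matching $\epsilon_i+m_i\geq1$ ($i<r$) against $\alpha_i\geq1$. For \eqref{eq2.3} the only genuine calculation is the Jacobian check for the above substitution; once that is done the identity is forced, and in fact the same chain run in reverse recovers \eqref{eq2.2} from \eqref{eq2.3}.
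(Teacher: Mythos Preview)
Your proof is correct and follows essentially the same route as the paper's. For \eqref{eq2.3} the two arguments are identical: apply Proposition~\ref{pro2.1} to the inner MZV sums, collapse the $r$-sum by the binomial theorem, and perform the change of variables $\frac{1-t_1}{1-t_2}=\frac{1}{1-u_1}$, $\frac{1}{t_2}=\frac{u_2}{u_1}$ (your explicit formulas and Jacobian check are exactly this substitution). For \eqref{eq2.2} the paper simply invokes the harmonic (stuffle) product of the MZV and MZSV factors by citing \cite{CCE618}, whereas you carry out that same stuffle expansion by hand via the ``distinct values $v_1<\cdots<v_r$ with multiplicities'' bookkeeping; your count $\binom{r-1}{p}$ is precisely the stuffle multiplicity, so this is the same mechanism made self-contained rather than a genuinely different approach.
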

\begin{proof}
Since $G_{n+2}(p,q)$ is a product of a multiple zeta value and 
a multiple zeta-star value, we use the shuffle product relation (ref. \cite{CCE618})
and then we get 
$$
G_{n+2}(p,q)=\sum^{p+q+1}_{r=p+1}{r-1\choose p}
\sum_{|\alpha|=p+q+1}\zeta(\alpha_1,\ldots,\alpha_r+n+1).
$$
Using Proposition \ref{pro2.1} we can transform this summation 
to an integral representation
\begin{eqnarray*}
\lefteqn{G_{n+2}(p,q)=
\sum^{p+q+1}_{r=p+1}{r-1\choose p}
\frac1{(r-1)!(p+q-r+1)!n!}}\\
&&\qquad\qquad\qquad\times\int_{E_2}
\left(\log\frac{t_2}{t_1}\right)^{p+q-r+1}
\left(\log\frac{1-t_1}{1-t_2}\right)^{r-1}
\left(\log\frac1{t_2}\right)^n\frac{dt_1dt_2}{(1-t_1)t_2}.
\end{eqnarray*}
Using the binomial theorem we have
$$
G_{n+2}(p,q)=
\frac1{p!q!n!}\int_{E_2}
\left(\log\frac{t_2}{t_1}+\log\frac{1-t_1}{1-t_2}\right)^q
\left(\log\frac{1-t_1}{1-t_2}\right)^p
\left(\log\frac1{t_2}\right)^n\frac{dt_1dt_2}{(1-t_1)t_2}.
$$
We change the variables (ref. \cite{CCE610})
$$
\frac{1-t_1}{1-t_2}=\frac1{1-u_1}\qquad\mbox{and}\qquad\frac1{t_2}=\frac{u_2}{u_1},
$$
then we get the final desired form.
\end{proof}
Therefore we can give a new integral represetation of $\zeta^\star(\{1\}^q,n+2)$.
\begin{corollary}\label{cor2.3} 
Let $q$ and $n$ be nonnegative integers. Then
\begin{eqnarray}\label{eq2.4} 
\zeta^\star(\{1\}^q,n+2) &=& \sum^{q+1}_{r=1}
\sum_{|\alpha|=q+1}\zeta(\alpha_1,\ldots,\alpha_r+n+1) \\
&=& \frac1{q!n!}\int_{E_2}\left(\log\frac1{1-t_2}\right)^q \label{eq2.5} 
\left(\log\frac{t_2}{t_1}\right)^n\frac{dt_1dt_2}{(1-t_1)t_2}.
\end{eqnarray}
\end{corollary}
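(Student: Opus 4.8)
The plan is to derive Corollary~\ref{cor2.3} directly from Theorem~\ref{thm2.2} by specializing $p=0$. Observe that $G_{n+2}(0,q)=\zeta^\star(\{1\}^q,n+2)$ was already recorded just after the definition~\eqref{eq1.1}, so both displayed identities~\eqref{eq2.4} and~\eqref{eq2.5} are nothing but the two right-hand sides of Theorem~\ref{thm2.2} evaluated at $p=0$. For~\eqref{eq2.4} I would substitute $p=0$ into~\eqref{eq2.2}: the binomial coefficient $\binom{r-1}{0}$ collapses to $1$, the outer summation index runs from $r=1$ to $r=q+1$, and the inner sum is over compositions $|\alpha|=q+1$, which is exactly the stated right-hand side. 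For~\eqref{eq2.5} I would substitute $p=0$ into~\eqref{eq2.3}: the factor $\bigl(\log\frac1{1-t_1}\bigr)^0=1$ disappears, $p!=1$, and the remaining integrand is precisely the claimed one.

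A second, essentially cosmetic step is to note that the edge cases behave correctly. When $q=0$ the left side is $\zeta(n+2)$, the sum on the right has the single term $r=1$, $\alpha_1=1$, giving $\zeta(n+2)$, and the integral reduces to the standard iterated-integral representation of $\zeta(n+2)$; when $n=0$ the factor $\bigl(\log\frac{t_2}{t_1}\bigr)^0=1$ and $n!=1$, again matching. Since Theorem~\ref{thm2.2} is already proved for all nonnegative integers $p,q,n$, no separate convergence or boundary argument is needed here.

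I do not anticipate any real obstacle: this is a direct corollary obtained by setting a parameter to zero, and the only thing to be careful about is bookkeeping — making sure the binomial coefficient, the factorials, and the range of the summation index all specialize consistently, and that the identity $G_{n+2}(0,q)=\zeta^\star(\{1\}^q,n+2)$ is invoked explicitly. Accordingly the written proof will be a single short paragraph that says ``put $p=0$ in Theorem~\ref{thm2.2}'' and spells out each of the two resulting simplifications.

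\begin{proof}
Both assertions follow immediately from Theorem~\ref{thm2.2} by setting $p=0$, together with the observation $G_{n+2}(0,q)=\zeta^\star(\{1\}^q,n+2)$ noted after~\eqref{eq1.1}. Indeed, putting $p=0$ in~\eqref{eq2.2} we have $\binom{r-1}{0}=1$ and the index $r$ ranges over $1\le r\le q+1$, which gives
$$
\zeta^\star(\{1\}^q,n+2)=\sum^{q+1}_{r=1}\sum_{|\alpha|=q+1}\zeta(\alpha_1,\ldots,\alpha_r+n+1),
$$
and putting $p=0$ in~\eqref{eq2.3} the factor $\bigl(\log\frac1{1-t_1}\bigr)^0$ equals $1$ and $p!=1$, so
$$
\zeta^\star(\{1\}^q,n+2)=\frac1{q!n!}\int_{E_2}\left(\log\frac1{1-t_2}\right)^q\left(\log\frac{t_2}{t_1}\right)^n\frac{dt_1dt_2}{(1-t_1)t_2}.
$$
This proves~\eqref{eq2.4} and~\eqref{eq2.5}.
\end{proof}
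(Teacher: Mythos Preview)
Your proof is correct and is exactly the argument the paper intends: the corollary is stated immediately after Theorem~\ref{thm2.2} with the remark ``Therefore we can give a new integral representation of $\zeta^\star(\{1\}^q,n+2)$'', i.e., it is the specialization $p=0$ together with the identification $G_{n+2}(0,q)=\zeta^\star(\{1\}^q,n+2)$.
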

The function $G_{n+2}(p,q)$ has a reflection formula as follows.
\begin{proposition}\label{pro2.4} 
For a pair of positive integers $p$, $q$, and an integer $k\geq 0$,
we have
\begin{equation}\label{eq2.6} 
G_{k+3}(p-1,q)+(-1)^kG_{k+3}(q-1,p)
=\sum_{a+b=k}(-1)^b\zeta(\{1\}^{p-1},a+2)\zeta(\{1\}^{q-1},b+2).
\end{equation}
\end{proposition}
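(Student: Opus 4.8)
The plan is to work from the double-integral representation \eqref{eq2.3} of Theorem~\ref{thm2.2}, where $E_2=\{(t_1,t_2):0<t_1<t_2<1\}$. Taking $n=k+1$ there gives
\[
G_{k+3}(p-1,q)=\frac1{(p-1)!\,q!\,(k+1)!}\int_{E_2}\Bigl(\log\tfrac1{1-t_1}\Bigr)^{p-1}\Bigl(\log\tfrac1{1-t_2}\Bigr)^{q}\Bigl(\log\tfrac{t_2}{t_1}\Bigr)^{k+1}\frac{dt_1dt_2}{(1-t_1)t_2},
\]
and the analogous formula for $G_{k+3}(q-1,p)$ with $p-1$ and $q$ interchanged. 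The first step is an integration by parts in $t_1$ (for fixed $t_2$), using $\bigl(\log\tfrac1{1-t_1}\bigr)^{p-1}\tfrac{dt_1}{1-t_1}=d\!\left[\tfrac1p\bigl(\log\tfrac1{1-t_1}\bigr)^{p}\right]$ and $d\!\left[\bigl(\log\tfrac{t_2}{t_1}\bigr)^{k+1}\right]=-(k+1)\bigl(\log\tfrac{t_2}{t_1}\bigr)^{k}\tfrac{dt_1}{t_1}$. The boundary terms vanish because $\bigl(\log\tfrac{t_2}{t_1}\bigr)^{k+1}=0$ at $t_1=t_2$ and $\bigl(\log\tfrac1{1-t_1}\bigr)^{p}=0$ at $t_1=0$ (this is where the hypotheses $p\ge1$, $k\ge0$ are used). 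This produces the symmetric-weight form
\[
G_{k+3}(p-1,q)=\frac1{p!\,q!\,k!}\int_{E_2}\Bigl(\log\tfrac1{1-t_1}\Bigr)^{p}\Bigl(\log\tfrac1{1-t_2}\Bigr)^{q}\Bigl(\log\tfrac{t_2}{t_1}\Bigr)^{k}\frac{dt_1dt_2}{t_1t_2},
\]
and, identically, $G_{k+3}(q-1,p)=\tfrac1{p!q!k!}\int_{E_2}\bigl(\log\tfrac1{1-t_1}\bigr)^{q}\bigl(\log\tfrac1{1-t_2}\bigr)^{p}\bigl(\log\tfrac{t_2}{t_1}\bigr)^{k}\tfrac{dt_1dt_2}{t_1t_2}$. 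The point of this step is to replace the asymmetric measure $\tfrac{dt_1dt_2}{(1-t_1)t_2}$ by the symmetric $\tfrac{dt_1dt_2}{t_1t_2}$.

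Next I would relabel $t_1\leftrightarrow t_2$ in the integral for $G_{k+3}(q-1,p)$: the domain $E_2$ turns into its mirror $\{0<t_2<t_1<1\}$, the factor $\bigl(\log\tfrac{t_2}{t_1}\bigr)^{k}$ picks up a sign $(-1)^k$, and the rest of the integrand coincides with that of $G_{k+3}(p-1,q)$. Hence $(-1)^kG_{k+3}(q-1,p)$ is the integral of the common integrand over the mirror triangle, and since the two triangles tile $(0,1)^2$ up to a null set,
\[
G_{k+3}(p-1,q)+(-1)^kG_{k+3}(q-1,p)=\frac1{p!\,q!\,k!}\int_{(0,1)^2}\Bigl(\log\tfrac1{1-t_1}\Bigr)^{p}\Bigl(\log\tfrac1{1-t_2}\Bigr)^{q}\Bigl(\log\tfrac{t_2}{t_1}\Bigr)^{k}\frac{dt_1dt_2}{t_1t_2}.
\]
Writing $\log\tfrac{t_2}{t_1}=\log\tfrac1{t_1}-\log\tfrac1{t_2}$ and expanding by the binomial theorem makes the integral over the square factor as $\tfrac1{p!q!k!}\sum_{a+b=k}\binom{k}{b}(-1)^{a}I_p(b)\,I_q(a)$, where $I_m(j)=\int_0^1\bigl(\log\tfrac1{1-t}\bigr)^{m}\bigl(\log\tfrac1{t}\bigr)^{j}\tfrac{dt}{t}$.

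Finally I would evaluate $I_m(j)$. Since $\zeta(\{1\}^{m-1},j+2)=G_{j+2}(m-1,0)$, one more application of \eqref{eq2.3} (integrating out $t_2$, then integrating by parts once more as above) gives $I_m(j)=m!\,j!\,\zeta(\{1\}^{m-1},j+2)$; this also follows directly from $\tfrac1{m!}\bigl(\log\tfrac1{1-t}\bigr)^{m}=\sum_{n\ge m}\zeta_{n-1}(\{1\}^{m-1})\,\tfrac{t^{n}}{n}$ and $\int_0^1 t^{n-1}\bigl(\log\tfrac1t\bigr)^{j}dt=j!/n^{j+1}$. Substituting and cancelling factorials via $\binom{k}{b}\,a!\,b!=k!$ leaves $\sum_{a+b=k}(-1)^{a}\zeta(\{1\}^{p-1},b+2)\zeta(\{1\}^{q-1},a+2)$, which is \eqref{eq2.6} after swapping the summation labels $a$ and $b$. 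I do not expect a real obstacle here: the two things to watch are the vanishing of the boundary terms in the integration by parts and the sign/domain bookkeeping under $t_1\leftrightarrow t_2$. The genuine idea is the symmetrizing integration by parts — it is what lets the two $G$-integrals be read as the two halves of a single integral over the unit square, which then splits into a product.
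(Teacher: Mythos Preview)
Your proof is correct and is essentially the paper's own argument run in reverse: the paper starts from the full-square integral $\frac1{p!q!k!}\int_{(0,1)^2}(\log\frac1{1-t})^p(\log\frac1{1-u})^q(\log\frac ut)^k\frac{dt\,du}{tu}$, expands by the binomial theorem to get the right-hand side, then splits the square into the two simplices and on each uses the same integration by parts (stated there only as ``can be rewritten as'') to recognize $G_{k+3}(p-1,q)$ and $(-1)^kG_{k+3}(q-1,p)$. Your presentation has the minor advantage of making the vanishing of the boundary terms explicit, but the ingredients and their roles are identical.
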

\begin{proof}
Consider the double integral
$$
\frac1{k!p!q!}\int^1_0\int^1_0
\left(\log\frac ut\right)^k\left(\log\frac1{1-t}\right)^p
\left(\log\frac 1{1-u}\right)^q\frac{dt}t\frac{du}u.
$$
Replace the factor 
$$
\left(\log\frac ut\right)^k=\left(\log\frac1t-\log\frac1u\right)^k
$$
by its binomial expansion
$$
\sum_{a+b=k}\frac{k!(-1)^b}{a!b!}\left(\log\frac1t\right)^a
\left(\log\frac1u\right)^b
$$
we see immediate that its value is given by 
$$
\sum_{a+b=k}(-1)^b\zeta(\{1\}^{p-1},a+2)
\zeta(\{1\}^{q-1},b+2).
$$
Now we decompose the square $[0,1]\times [0,1]$ into 
union of two simplices 
$$
D_1:0<t<u<1 \quad\mbox{and}\quad D_2:0<u<t<1.
$$
On $D_1:0<t<u<1$, the corresponding integral 
$$
\frac1{p!q!k!}\int_{0<t<u<1}
\left(\log\frac1{1-t}\right)^p\left(\log\frac1{1-u}\right)^q
\left(\log\frac ut\right)^k\frac{dt}t\frac{du}u
$$
can be rewritten as 
$$
\frac1{(p-1)!q!(k+1)!}\int_{0<t<u<1}
\left(\log\frac1{1-t}\right)^{p-1}
\left(\log\frac1{1-u}\right)^q
\left(\log\frac ut\right)^{k+1}\frac{dt}{1-t}\frac{du}u
$$
which is equal to $G_{k+3}(p-1,q)$. In the same manner,
the corresponding integral on $D_2$ is $(-1)^kG_{k+3}(q-1,p)$.
\end{proof}
Furthermore, the values of $G_2(p,q)$ can be easily calculated.
\begin{proposition} \label{pro2.5} 
\begin{equation} \label{eq2.7} 
G_2(p,q)={p+q+1\choose q}\zeta(p+q+2)=G_2(q-1,p+1).
\end{equation}
\end{proposition}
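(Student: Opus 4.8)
The plan is to start from the double-integral representation \eqref{eq2.3} of Theorem \ref{thm2.2} specialized to $n=0$, namely
$$
G_2(p,q)=\frac1{p!\,q!}\int_{E_2}\left(\log\frac1{1-t_1}\right)^p\left(\log\frac1{1-t_2}\right)^q\frac{dt_1\,dt_2}{(1-t_1)t_2},
$$
and to perform the inner integration over $t_1$ on the simplex $E_2=\{0<t_1<t_2<1\}$. Since $\left(\log\frac1{1-t_1}\right)^p\frac{dt_1}{1-t_1}$ is an exact differential (with primitive $\frac1{p+1}\left(\log\frac1{1-t_1}\right)^{p+1}$), one gets $\int_0^{t_2}\left(\log\frac1{1-t_1}\right)^p\frac{dt_1}{1-t_1}=\frac1{p+1}\left(\log\frac1{1-t_2}\right)^{p+1}$, so the double integral collapses to the single integral
$$
G_2(p,q)=\frac1{p!\,q!\,(p+1)}\int_0^1\left(\log\frac1{1-t_2}\right)^{p+q+1}\frac{dt_2}{t_2}.
$$

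Next I would evaluate the elementary integral $\int_0^1\left(\log\frac1{1-t}\right)^{m}\frac{dt}{t}$ for $m=p+q+1$. The substitution $1-t=e^{-u}$ turns it into $\int_0^\infty\frac{u^{m}}{e^u-1}\,du=\sum_{k\geq1}\int_0^\infty u^{m}e^{-ku}\,du=m!\,\zeta(m+1)$. Substituting this back gives
$$
G_2(p,q)=\frac{(p+q+1)!}{(p+1)!\,q!}\,\zeta(p+q+2)=\binom{p+q+1}{q}\zeta(p+q+2),
$$
the last step being $\binom{p+q+1}{p+1}=\binom{p+q+1}{q}$. Finally, for the reflection $G_2(p,q)=G_2(q-1,p+1)$ (read for $q\geq1$) I would simply substitute $(p,q)\mapsto(q-1,p+1)$ into the closed form just obtained: the weight $p+q+2$ is unchanged and $\binom{p+q+1}{q}\mapsto\binom{(q-1)+(p+1)+1}{p+1}=\binom{p+q+1}{p+1}=\binom{p+q+1}{q}$, so the two values coincide.

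There is essentially no serious obstacle here: the whole argument is a one-variable reduction of the already-established formula \eqref{eq2.3}. The only point needing a little care is the evaluation $\int_0^1(\log\frac1{1-t})^m\frac{dt}{t}=m!\,\zeta(m+1)$, which is the classical identity underlying $\zeta(\{1\}^{m-1},2)=\zeta(m+1)$ and is routine. It is also worth recording that the boundary cases are not excluded: $p=0$ gives $\zeta^\star(\{1\}^q,2)=(q+1)\zeta(q+2)$ and $q=0$ gives $\zeta(\{1\}^p,2)=\zeta(p+2)$, both consistent with the formula. (Alternatively, one could instead start from \eqref{eq2.2}, apply the sum formula for multiple zeta values to each inner sum, and finish with the hockey-stick identity $\sum_{j=p}^{p+q}\binom{j}{p}=\binom{p+q+1}{p+1}$; but the integral route above is shorter and entirely self-contained within the paper.)
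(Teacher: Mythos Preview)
Your argument is correct and is a genuinely different route from the paper's. The paper starts from the same integral \eqref{eq2.3} at $n=0$ but, instead of integrating $t_1$ out directly, writes $\log\frac1{1-t_2}=\log\frac{1-t_1}{1-t_2}+\log\frac1{1-t_1}$, expands $\bigl(\log\frac1{1-t_2}\bigr)^q$ by the binomial theorem, recognizes each resulting integral as $\zeta(\{1\}^{p+q},2)$, sums the coefficients via the hockey-stick identity $\sum_{j=0}^{q}\binom{p+j}{p}=\binom{p+q+1}{q}$, and finally invokes the duality $\zeta(\{1\}^{p+q},2)=\zeta(p+q+2)$. Your approach bypasses both the binomial expansion and the appeal to MZV duality by noticing that $\bigl(\log\frac1{1-t_1}\bigr)^p\frac{dt_1}{1-t_1}$ has the obvious primitive; the single integral you are left with, $\int_0^1\bigl(\log\frac1{1-t}\bigr)^{m}\frac{dt}{t}=m!\,\zeta(m+1)$, is of course equivalent to that duality in the height-one case, but you prove it from scratch, so your argument is self-contained. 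The trade-off is that the paper's detour makes visible the connection to $\zeta(\{1\}^{p+q},2)$ and to the combinatorics of Proposition~\ref{pro2.1}, whereas your computation is shorter and needs no external MZV identities. Your parenthetical alternative via \eqref{eq2.2} plus the MZV sum formula is yet a third route, also correct, and is the closest in spirit to the paper's method.
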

\begin{proof}
Since $G_2(p,q)$ have the following integral representation
$$
\frac1{p!q!}\int_{E_2}\left(\log\frac1{1-t_1}\right)^p
\left(\log\frac1{1-t_2}\right)^q\frac{dt_1dt_2}{(1-t_1)t_2}.
$$
With $\log\frac1{1-t_2}=\log\frac{1-t_1}{1-t_2}+\log\frac1{1-t_1}$ 
we use the binomial theorem to decompose the second factor in the above representation,
then we have
$$
\sum^q_{j=0}{p+j\choose p}\zeta(\{1\}^{p+q},2).
$$
Since 
$$
\sum^q_{j=0}{p+j\choose p}={p+q+1\choose q}
$$
and using the dual theorem
$$
\zeta(\{1\}^{p+q},2)=\zeta(p+q+2)
$$
we conclude the first identity. Since ${p+q+1\choose q}={p+q+1\choose p+1}$
we get the second identity.
\end{proof}
There is an easy result:
$$
\zeta^\star(\{1\}^q,2)=G_2(0,q)=(q+1)\zeta(q+2).
$$
\section{The generating function of $\zeta^\star(r+2,\{2\}^m)$}
Let $r$ and $m$ be nonnegative integers. 
We define the generating function of $\zeta^\star(r+2,\{2\}^m)$ to be
\begin{equation}\label{eq3.1} 
G^\star_r(x):=\sum^\infty_{m=0}\zeta^\star(r+2,\{2\}^m)x^{2m}.
\end{equation}
This generating function can be expressed as the following.
\begin{proposition} \label{pro3.1} 
\begin{equation} \label{eq3.2} 
G^\star_r(x)=\sum^\infty_{k=1}\frac1{k^{r+2}}
\frac{\Gamma(k+x)\Gamma(k-x)}{\Gamma(k)^2}.
\end{equation}
\end{proposition}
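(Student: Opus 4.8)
The plan is to expand the star value in \eqref{eq3.1}, isolate the innermost (i.e.\ smallest) summation index, and recognize what remains as a product of geometric-type factors that collapses to a ratio of Gamma functions. Straight from the definitions,
$$
\zeta^\star(r+2,\{2\}^m)=\sum_{1\le k\le j_1\le j_2\le\cdots\le j_m}\frac1{k^{r+2}\,j_1^2j_2^2\cdots j_m^2}=\sum_{k=1}^\infty\frac1{k^{r+2}}\,\sigma_{k,m},
$$
where $\sigma_{k,m}:=\sum_{k\le j_1\le\cdots\le j_m}(j_1\cdots j_m)^{-2}$ and $\sigma_{k,0}:=1$. Substituting into \eqref{eq3.1} and interchanging the two summations over $m$ and $k$ --- legitimate since, for real $x$ with $0\le x^2<1$, all the terms are nonnegative (Tonelli), and afterwards both sides are honest power series in $x$ --- gives
$$
G^\star_r(x)=\sum_{k=1}^\infty\frac1{k^{r+2}}\sum_{m=0}^\infty\sigma_{k,m}\,x^{2m}.
$$

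The crux is to evaluate the inner series in closed form. Note that $\sigma_{k,m}$ is exactly the complete homogeneous symmetric function of degree $m$ in the variables $k^{-2},(k+1)^{-2},(k+2)^{-2},\ldots$, so by the standard identity $\sum_{m\ge0}h_m(y_1,y_2,\ldots)\,t^m=\prod_i(1-y_it)^{-1}$ (equivalently, via the recursion $\sigma_{k,m}=k^{-2}\sigma_{k,m-1}+\sigma_{k+1,m}$),
$$
\sum_{m=0}^\infty\sigma_{k,m}\,x^{2m}=\prod_{j=k}^\infty\Bigl(1-\frac{x^2}{j^2}\Bigr)^{-1}=\prod_{j=k}^\infty\frac{j^2}{(j-x)(j+x)}.
$$
I would then pass from this infinite product to Gamma functions through a finite cutoff $N\ge k$: writing $\prod_{j=k}^N j$ and $\prod_{j=k}^N(j\pm x)$ each as a ratio of Gamma values yields
$$
\prod_{j=k}^N\frac{j^2}{(j-x)(j+x)}=\frac{\Gamma(k+x)\Gamma(k-x)}{\Gamma(k)^2}\cdot\frac{\Gamma(N+1)^2}{\Gamma(N+1+x)\,\Gamma(N+1-x)},
$$
and the last factor tends to $1$ as $N\to\infty$ by $\Gamma(N+a)/\Gamma(N+b)\sim N^{a-b}$ (this is just the shifted form of Euler's product $\prod_{j\ge1}(1-x^2/j^2)=\sin(\pi x)/(\pi x)$ combined with $\Gamma(1+x)\Gamma(1-x)=\pi x/\sin\pi x$). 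Feeding $\sum_{m}\sigma_{k,m}x^{2m}=\Gamma(k+x)\Gamma(k-x)/\Gamma(k)^2$ back in produces precisely \eqref{eq3.2}.

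I expect the only genuinely delicate points to be the two limiting operations: the interchange of the double sum (harmless, by the positivity remark above) and the convergence of the truncated Gamma-product, i.e.\ that the ``tail factor'' $\Gamma(N+1)^2/(\Gamma(N+1+x)\Gamma(N+1-x))$ really does approach $1$. Identifying $\sigma_{k,m}$ with a complete homogeneous symmetric function --- or, what amounts to the same thing, spotting the recursion $\sigma_{k,m}=k^{-2}\sigma_{k,m-1}+\sigma_{k+1,m}$ --- is the one mildly non-obvious ingredient, but it is entirely standard; the rest is bookkeeping.
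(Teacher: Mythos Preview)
Your proposal is correct and follows essentially the same route as the paper: both isolate the smallest index $k$, recognize $\sum_m\sigma_{k,m}x^{2m}=\prod_{j\ge k}(1-x^2/j^2)^{-1}$, and then identify this product with $\Gamma(k+x)\Gamma(k-x)/\Gamma(k)^2$. The only cosmetic difference is that the paper establishes the last identity directly from the Weierstrass product for $1/\Gamma(s+1)$, whereas you reach it via a finite cutoff and the asymptotic $\Gamma(N+a)/\Gamma(N+b)\sim N^{a-b}$; your additional care with the interchange of sums is a bonus the paper omits.
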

\begin{proof}
From the well-known formula 
$$
\frac1{\Gamma(s+1)}=e^{\gamma s}\prod^\infty_{n=1}\left(1+\frac sn\right)e^{-\frac sn},
$$
we have
\begin{eqnarray}\nonumber
\frac{\Gamma(k+x)\Gamma(k-x)}{\Gamma(k)^2}
&=& \frac{k^2}{k^2-x^2}\frac{\Gamma(k+x+1)\Gamma(k-x+1)}{\Gamma(k+1)^2} \\
&=& \frac{k^2}{k^2-x^2}\prod^\infty_{n=1} \nonumber
\frac{\left(1+\frac kn\right)^2}{\left(1+\frac{k+x}n\right)\left(1+\frac{k-x}n\right)} \\
&=& \frac1{1-\left(\frac xk\right)^2}\prod^\infty_{n=1}\frac1{1-\frac{x^2}{(n+k)^2}} 
\ \ =\ \  \prod_{n\geq k}\left(1-\frac{x^2}{n^2}\right)^{-1}.\label{eq3.3}
\end{eqnarray}
Since
$$
\sum^\infty_{m=0}\zeta^\star(r+2,\{2\}^m)x^{2m}
=\sum^\infty_{k=1}\frac1{k^{r+2}}\prod_{n\geq k}
\left(1-\frac{x^2}{n^2}\right)^{-1}
=\sum^\infty_{k=1}\frac1{k^{r+2}}\frac{\Gamma(k+x)\Gamma(k-x)}{\Gamma(k)^2}.
$$
Thus we have this conclusion.
\end{proof}
This identitiy can give us the integral representation of $\zeta^\star(r+2,\{2\}^n)$.
\begin{theorem} \label{thm3.2} 
For integers $n, r\geq 0$, we have
\begin{eqnarray}\nonumber
\lefteqn{\frac1{r!n!}\int_{E_2}\left(\log\frac{1-t_1}{1-t_2}\right)^r
\left(\log\frac1{1-t_1}-\log\frac{t_2}{t_1}\right)^n\,\frac{dt_1dt_2}{(1-t_1)t_2}}\\
&&\qquad\qquad\qquad\qquad\qquad=\left\{\begin{array}{ll}
\zeta^\star(r+2,\{2\}^m),&\mbox{if }n=2m,\\
0,&\mbox{if }n=2m+1,\end{array}\right.\label{eq3.4}
\end{eqnarray}
where $E_2=\{(t_1,t_2)\in\mathbb R^2\mid 0<t_1<t_2<1\}$.
\end{theorem}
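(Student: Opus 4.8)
The plan is to deduce Theorem~\ref{thm3.2} from Proposition~\ref{pro3.1} by assembling the two sides into a generating function in an auxiliary variable $x$. Write $L(t_1,t_2)=\log\frac{t_1}{(1-t_1)t_2}$, so that $\log\frac1{1-t_1}-\log\frac{t_2}{t_1}=L(t_1,t_2)$. It is enough to prove
$$
\sum_{n\geq 0}\frac{x^n}{r!\,n!}\int_{E_2}\Bigl(\log\frac{1-t_1}{1-t_2}\Bigr)^r L(t_1,t_2)^n\,\frac{dt_1dt_2}{(1-t_1)t_2}=G^\star_r(x)
$$
for $x$ near $0$: comparing the coefficient of $x^n$ on both sides against the definition \eqref{eq3.1} of $G^\star_r$ then gives the value $\zeta^\star(r+2,\{2\}^m)$ when $n=2m$, while the vanishing for $n=2m+1$ is immediate because \eqref{eq3.3} exhibits $G^\star_r$ as a power series in $x^2$.

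For $|x|<1$ I would justify interchanging the summation over $n$ with the integral, by dominated convergence: the partial sums are controlled by $e^{|x|\,|L(t_1,t_2)|}$, and one checks that
$$
\int_{E_2}\Bigl(\log\tfrac{1-t_1}{1-t_2}\Bigr)^r e^{|x|\,|L(t_1,t_2)|}\,\frac{dt_1dt_2}{(1-t_1)t_2}<\infty ,
$$
the only places to inspect being $t_1\to0$, $t_1\to1$, and $t_2\to1$, where in each case the logarithmic weights keep the integral convergent (the $t_1$- and $t_2$-integrations tame one another near the corner $t_1,t_2\to1$). Once the interchange is licensed, the $n$-sum collapses to $e^{xL}=\bigl(\tfrac{t_1}{(1-t_1)t_2}\bigr)^x$, so the task reduces to evaluating
$$
J_r(x):=\frac1{r!}\int_{E_2}\Bigl(\log\tfrac{1-t_1}{1-t_2}\Bigr)^r\Bigl(\tfrac{t_1}{(1-t_1)t_2}\Bigr)^x\,\frac{dt_1dt_2}{(1-t_1)t_2}.
$$

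To compute $J_r(x)$ I would integrate in $t_2$ first. The substitution $1-t_2=(1-t_1)e^{-u}$ turns $\log\frac{1-t_1}{1-t_2}$ into $u$ and the range $t_2\in(t_1,1)$ into $u\in(0,\infty)$; expanding $\bigl(1-(1-t_1)e^{-u}\bigr)^{-x-1}=\sum_{j\geq0}\binom{x+j}{j}(1-t_1)^je^{-ju}$ and using $\int_0^\infty u^r e^{-(j+1)u}\,du=r!/(j+1)^{r+1}$ gives the inner integral as $r!\sum_{k\geq1}\binom{x+k-1}{k-1}(1-t_1)^k/k^{r+1}$. Integrating the result in $t_1$ and recognising a Beta integral, $\int_0^1 t_1^x(1-t_1)^{k-1-x}\,dt_1=B(x+1,k-x)=\Gamma(x+1)\Gamma(k-x)/\Gamma(k+1)$, then combining with $\binom{x+k-1}{k-1}=\Gamma(k+x)/(\Gamma(k)\Gamma(x+1))$ and $\Gamma(k+1)=k\,\Gamma(k)$, yields
$$
J_r(x)=\sum_{k\geq1}\frac1{k^{r+2}}\,\frac{\Gamma(k+x)\Gamma(k-x)}{\Gamma(k)^2},
$$
which is precisely $G^\star_r(x)$ by Proposition~\ref{pro3.1}. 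Comparing $x^n$-coefficients finishes the proof.

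The main obstacle is the analytic bookkeeping rather than the algebra: one must justify the two interchanges of sum and integral and fix a legitimate range of $x$ (for instance $|x|<1$), or equivalently run the identity at the level of formal power series in $x$ and invoke absolute convergence only at the end. The algebraic heart of the argument — the $t_2$-integral producing a Beta integral in $t_1$ that reassembles exactly the $\Gamma$-quotient of Proposition~\ref{pro3.1} — is routine once the substitution $1-t_2=(1-t_1)e^{-u}$ is in hand.
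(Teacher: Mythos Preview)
Your argument is correct and follows the paper's own strategy: introduce the parameter $x$, collapse the $n$-sum to the single integral $J_r(x)$ (the paper calls it $H_r(x)$), evaluate it as $\sum_{k\ge1}k^{-r-2}\Gamma(k+x)\Gamma(k-x)/\Gamma(k)^2$, and invoke Proposition~\ref{pro3.1}. The only real difference is in how that integral is evaluated: the paper first applies the duality change of variables $(t_1,t_2)\mapsto(1-t_2,1-t_1)$ and then expands, whereas you work directly in the original variables via the substitution $1-t_2=(1-t_1)e^{-u}$; both routes land on the same Beta integral, and your version has the advantage of spelling out the dominated-convergence justification that the paper leaves implicit.
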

\begin{proof}
The double integrals come from the differentiation of the following function $H_r(x)$ wtih 
parameter $x>-1$, defined by 
$$
H_r(x)=\frac1{r!}\int_{E_2}\left(\log\frac{1-t_1}{1-t_2}\right)^r
\left(\frac{t_1}{t_2}\right)^x(1-t_1)^{-x}\frac{dt_1dt_2}{(1-t_1)t_2}.
$$
Indeed, we have 
$$
\frac1{m!}\left(\frac{d}{dx}\right)^{m}H_r(x)\Big|_{x=0}
=\frac1{r!m!}\int_{E_2}\left(\log\frac{1-t_1}{1-t_2}\right)^r
\left(\log\frac1{1-t_1}-\log\frac{t_2}{t_1}\right)^{m}\frac{dt_1dt_2}{(1-t_1)t_2},
$$
since 
$$
\frac{d}{dx}\left(\frac{t_1}{t_2}\right)^x(1-t_1)^{-x}
=\left(\log\frac1{1-t_1}-\log\frac{t_2}{t_1}\right)
\left(\frac{t_1}{t_2}\right)^x(1-t_1)^{-x}.
$$
Under the change of variables $u_1=1-t_2$ and $u_2=1-t_1$, $H_r(x)$ is transformed into 
$$
\frac1{r!}\int_{E_2}\left(\log\frac{u_2}{u_1}\right)^r\left(\frac{1-u_2}{1-u_1}\right)^x
u_2^{-x}\frac{du_1du_2}{(1-u_1)u_2}
$$
and it can be evaluated as 
$$
\sum^\infty_{k=1}\frac1{k^{r+2}}\frac{\Gamma(k+x)\Gamma(k-x)}{\Gamma(k)^2}.
$$
Along with our previous results, we conclude that 
\begin{equation}\label{eq3.5} 
G^\star_r(x)=H_r(x)=\frac1{r!}\int_{E_2}\left(\log\frac{1-t_1}{1-t_2}\right)^r
\left(\frac{t_1}{t_2}\right)^x(1-t_1)^{-x}\frac{dt_1dt_2}{(1-t_1)t_2}.
\end{equation}
Take the coefficients of $x^{2n}$ of both sides, we obtain our double integral
representation of $\zeta^\star(r+2,\{2\}^n)$. Also note that $H(x)$ is an even function of $x$,
so that its coefficients of odd powers vanish.
\end{proof}
We can express $\zeta^\star(r+2,\{2\}^m)$ as a sum of $G_{n+2}(p,q)$.
\begin{proposition} \label{pro3.3} 
\begin{equation}\label{eq3.6} 
\zeta^\star(r+2,\{2\}^m)
=\sum_{p+q=2m\atop{a+b=r}}
(-1)^{q+b}{p+b\choose p}G_{q+2}(p+b,a).
\end{equation}
\end{proposition}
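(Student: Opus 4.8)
The plan is to derive \eqref{eq3.6} from the double-integral representation of $\zeta^\star(r+2,\{2\}^m)$ given in Theorem \ref{thm3.2}. Taking $n=2m$ in \eqref{eq3.4} yields
$$
\zeta^\star(r+2,\{2\}^m)=\frac1{r!(2m)!}\int_{E_2}\left(\log\frac{1-t_1}{1-t_2}\right)^r
\left(\log\frac1{1-t_1}-\log\frac{t_2}{t_1}\right)^{2m}\frac{dt_1dt_2}{(1-t_1)t_2}.
$$
The idea is to rewrite both factors in terms of the three elementary logarithms $\log\frac1{1-t_1}$, $\log\frac1{1-t_2}$, $\log\frac{t_2}{t_1}$ that occur in the representation \eqref{eq2.3} of $G_{n+2}(p,q)$, and then expand by the binomial theorem. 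Since $\log\frac{1-t_1}{1-t_2}=\log\frac1{1-t_2}-\log\frac1{1-t_1}$, one obtains
$$
\left(\log\frac{1-t_1}{1-t_2}\right)^r=\sum_{a+b=r}(-1)^b{r\choose b}\left(\log\frac1{1-t_1}\right)^b\left(\log\frac1{1-t_2}\right)^a,
$$
$$
\left(\log\frac1{1-t_1}-\log\frac{t_2}{t_1}\right)^{2m}=\sum_{p+q=2m}(-1)^q{2m\choose q}\left(\log\frac1{1-t_1}\right)^p\left(\log\frac{t_2}{t_1}\right)^q.
$$

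Substituting these two expansions into the integral and interchanging the finite sums with the integral, the generic summand is a constant multiple of
$$
\int_{E_2}\left(\log\frac1{1-t_1}\right)^{p+b}\left(\log\frac1{1-t_2}\right)^a\left(\log\frac{t_2}{t_1}\right)^q\frac{dt_1dt_2}{(1-t_1)t_2},
$$
and by \eqref{eq2.3} this integral equals $(p+b)!\,a!\,q!\,G_{q+2}(p+b,a)$. Hence, in the resulting double sum over $a+b=r$ and $p+q=2m$, the coefficient of $G_{q+2}(p+b,a)$ is
$$
(-1)^{b+q}\,\frac1{r!(2m)!}\,{r\choose b}{2m\choose q}\,(p+b)!\,a!\,q!.
$$
Because $a+b=r$ and $p+q=2m$, we have ${r\choose b}\,a!/r!=1/b!$ and ${2m\choose q}\,q!/(2m)!=1/p!$, so this coefficient simplifies to $(-1)^{b+q}(p+b)!/(b!\,p!)=(-1)^{q+b}{p+b\choose p}$, exactly the coefficient appearing in \eqref{eq3.6}.

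There is no analytic obstacle here: every sum involved is finite and the integrals are precisely the absolutely convergent ones already used in Sections 2 and 3, and only the even case $n=2m$ of \eqref{eq3.4} is needed, so the vanishing of the odd-order coefficients plays no role. The one step that demands a little care is the combinatorial bookkeeping --- checking that the double summation generated by the two binomial expansions has exactly the index set stated, and that the product of binomial coefficients and factorials telescopes to the single coefficient ${p+b\choose p}$. I would carry out this last simplification explicitly, since it is the only place where a slip could occur.
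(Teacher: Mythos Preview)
Your proof is correct and follows essentially the same approach as the paper's own proof: start from the integral representation \eqref{eq3.4} with $n=2m$, expand both logarithmic factors via the binomial theorem, and recognize the resulting integrals as \eqref{eq2.3}. The paper's argument is terser (it only explicitly mentions decomposing the first factor and leaves the coefficient bookkeeping to the reader), but the content is the same.
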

\begin{proof}
The integral representation of $\zeta^\star(r+2,\{2\}^m)$
in Theorem \ref{thm3.2} is
$$
\zeta^\star(r+2,\{2\}^m)
=\frac1{r!(2m)!}\int_{E_2}\left(\log\frac{1-t_1}{1-t_2}\right)^r
\left(\log\frac1{1-t_1}-\log\frac{t_2}{t_1}\right)^{2m}
\frac{dt_1dt_2}{(1-t_1)t_2}.
$$
We use the binomial theorem to decompose the first factor 
in the above integral with 
$$
\log\frac{1-t_1}{1-t_2}=\log\frac1{1-t_2}-\log\frac1{1-t_1}.
$$
Then from the integral representation of $G_{n+2}(p,q)$
in Theorem \label{thm2.2}, we get the conclusion.
\end{proof}
\section{Sums of multiple zeta values of height one}
\begin{proposition}\label{pro4.1} 
For integers $n\geq 0$ and $s, r\geq 2$, we have
\begin{equation}\label{eq4.1} 
\sum_{a+b=n}\zeta^\star(\{s\}^a)\zeta(sb+r)
=\sum_{a+b=n}\zeta^\star(\{s\}^a,r,\{s\}^b).
\end{equation}
\end{proposition}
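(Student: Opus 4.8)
The plan is to compute the right-hand side by singling out the value $m$ of the middle summation index (the one carrying the exponent $r$) and factoring the resulting sum through a generating function in an auxiliary variable $z$. For $a\geq 0$ set $L_a(m)=\zeta_m^\star(\{s\}^a)=\sum_{1\leq k_1\leq\cdots\leq k_a\leq m}(k_1\cdots k_a)^{-s}$, and for $b\geq 0$ set $R_b(m)=\sum_{m\leq\ell_1\leq\cdots\leq\ell_b}(\ell_1\cdots\ell_b)^{-s}$, with the usual conventions $L_0(m)=R_0(m)=1$. Splitting the defining series of $\zeta^\star(\{s\}^a,r,\{s\}^b)$ at its middle index gives
\[
\sum_{a+b=n}\zeta^\star(\{s\}^a,r,\{s\}^b)=\sum_{m=1}^{\infty}\frac{1}{m^r}\sum_{a+b=n}L_a(m)R_b(m),
\]
and the inner sum is the coefficient of $z^n$ in the product $\bigl(\sum_{a\geq 0}L_a(m)z^a\bigr)\bigl(\sum_{b\geq 0}R_b(m)z^b\bigr)$.

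The first step is to record the two product expansions
\[
\sum_{a\geq 0}L_a(m)z^a=\prod_{k=1}^{m}\frac{1}{1-z/k^s},\qquad
\sum_{b\geq 0}R_b(m)z^b=\prod_{k\geq m}\frac{1}{1-z/k^s},
\]
each of which follows by expanding every factor as a geometric series, since a weakly increasing tuple with entries in $\{1,\dots,m\}$ (respectively in $\{m,m+1,\dots\}$) is nothing but an assignment of a nonnegative multiplicity to each admissible integer. Multiplying the two identities, the factor indexed by $k=m$ appears in both products, so
\[
\Bigl(\sum_{a\geq 0}L_a(m)z^a\Bigr)\Bigl(\sum_{b\geq 0}R_b(m)z^b\Bigr)
=\frac{1}{1-z/m^s}\prod_{k\geq 1}\frac{1}{1-z/k^s}
=\frac{1}{1-z/m^s}\sum_{c\geq 0}\zeta^\star(\{s\}^c)\,z^c,
\]
using once more the same geometric-series expansion for the last equality. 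Reading off the coefficient of $z^n$ yields $\sum_{a+b=n}L_a(m)R_b(m)=\sum_{j=0}^{n}m^{-sj}\,\zeta^\star(\{s\}^{n-j})$.

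Substituting this into the first display and interchanging the summations over $m$ and $j$ (legitimate because $s\geq 2$ and $r\geq 2$ make every series in sight a series of nonnegative terms that converges absolutely), we get
\[
\sum_{a+b=n}\zeta^\star(\{s\}^a,r,\{s\}^b)
=\sum_{j=0}^{n}\zeta^\star(\{s\}^{n-j})\sum_{m=1}^{\infty}\frac{1}{m^{r+sj}}
=\sum_{j=0}^{n}\zeta^\star(\{s\}^{n-j})\,\zeta(r+sj),
\]
which is precisely the left-hand side of \eqref{eq4.1} after relabelling $b=j$ and $a=n-j$. The only delicate points are the bookkeeping of the repeated factor at $k=m$ when the two generating functions are multiplied together, and the justification for reading off coefficients of the product and for interchanging the sum over $m$ with the finite sum over $j$; both are harmless given the positivity and absolute convergence noted above (if one prefers, one may work throughout with $\prod_{k\geq m}$ replaced by the finite truncation $\prod_{m\leq k\leq M}$ and let $M\to\infty$ at the end). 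I expect the overlap bookkeeping at $k=m$ to be the only real, and quite minor, obstacle.
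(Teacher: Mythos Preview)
Your proof is correct and follows essentially the same route as the paper: both split the series at the middle index carrying the exponent $r$, recognize the two flanking factors as the products $\prod_{k\leq m}(1-z/k^s)^{-1}$ and $\prod_{k\geq m}(1-z/k^s)^{-1}$, and use the overlap at $k=m$ to factor the result as $(1-z/m^s)^{-1}\prod_{k\geq 1}(1-z/k^s)^{-1}$. The only cosmetic difference is that the paper packages the whole computation into a single generating function $F(x)=\sum_{a,b}\zeta^\star(\{s\}^a,r,\{s\}^b)\,x^{(a+b)s}$ and compares coefficients at the very end, whereas you fix $m$ first and extract the coefficient of $z^n$ before summing over $m$.
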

\begin{proof}
Let $F(x)$ be the generating function of $\zeta^\star(\{s\}^a,r,\{s\}^b)$, 
where $s,r\geq 2$, that is 
$$
F(x)=\sum_{a,b\geq 0}\zeta^\star(\{s\}^a,r,\{s\}^b)x^{(a+b)s}.
$$
From this definition we have
\begin{eqnarray*}
F(x) &=& \sum^\infty_{n=1}\prod_{0<k\leq n}\left(1-\frac{x^s}{k^s}\right)^{-1}\cdot
\frac{1}{n^r}\cdot\prod_{\ell\geq n}\left(1-\frac{x^s}{\ell^s}\right)^{-1} \\
&=& \prod^\infty_{k=1}\left(1-\frac{x^s}{k^s}\right)^{-1}\cdot
\sum^\infty_{n=1}\frac1{n^r}
\left(1-\frac{x^s}{n^s}\right)^{-1} \\
&=& \sum^\infty_{n=0}\zeta^\star(\{s\}^n)x^{ns}\cdot
\sum^\infty_{k=0}\zeta(sk+r)x^{ks}\\
&=&\sum_{a,b\geq 0}\zeta^\star(\{s\}^a)\zeta(sb+r)x^{(a+b)s}.
\end{eqnarray*}
\end{proof}
We write $\zeta(\{1\}^r,n+2-r)$ in its double integral form, then 
$$
\sum^n_{r=0}(-1)^{r+n}\zeta(\{1\}^r,n+2-r)
=\frac1{n!}\int_{E_2}\left(\log\frac1{1-t_1}-\log\frac{t_2}{t_1}\right)^n
\frac{dt_1dt_2}{(1-t_1)t_2}.
$$
Using Theorem \ref{thm3.2} we have the following proposition.
\begin{proposition} \label{pro4.2} 
Let $n$ be a nonnegative integer. Then
\begin{equation}\label{eq4.2} 
\sum^n_{r=0}(-1)^{r+n}\zeta(\{1\}^r,n+2-r)
=\left\{\begin{array}{ll}
\zeta^\star(\{2\}^{m+1}), &\mbox{if }n=2m,\\
0, &\mbox{if }n=2m+1.
\end{array}\right.
\end{equation}
\end{proposition}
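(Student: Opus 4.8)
The plan is to collapse the alternating sum into a single double integral over $E_2$ and then invoke Theorem~\ref{thm3.2} with $r=0$. The first step is to record the integral representation of each summand. Writing $n+2-r=(n-r)+2$ and specializing Theorem~\ref{thm2.2} to $q=0$ (so that $G_{n+2}(p,0)=\zeta(\{1\}^p,n+2)$, and (\ref{eq2.3}) loses its middle factor), one obtains, for $0\le r\le n$,
$$
\zeta(\{1\}^r,n+2-r)=\frac1{r!(n-r)!}\int_{E_2}\left(\log\frac1{1-t_1}\right)^r\left(\log\frac{t_2}{t_1}\right)^{n-r}\frac{dt_1dt_2}{(1-t_1)t_2}.
$$
The endpoint $r=n$ is included here, with the factor $\left(\log\frac{t_2}{t_1}\right)^0=1$ and $\zeta(\{1\}^n,2)=\zeta(n+2)$ by duality; the displayed integral is absolutely convergent near both $t_1=0$ and $t_1=1$, so no separate treatment of that term is needed.

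Next I would substitute this into $\sum_{r=0}^n(-1)^{r+n}\zeta(\{1\}^r,n+2-r)$, write $\frac1{r!(n-r)!}=\frac1{n!}\binom{n}{r}$, and bring the finite sum inside the integral. Since $r+n$ and $n-r$ differ by $2r$, we have $(-1)^{r+n}=(-1)^{n-r}$, and the bracketed sum is exactly a binomial expansion:
$$
\frac1{n!}\sum_{r=0}^n\binom{n}{r}(-1)^{n-r}\left(\log\frac1{1-t_1}\right)^r\left(\log\frac{t_2}{t_1}\right)^{n-r}=\frac1{n!}\left(\log\frac1{1-t_1}-\log\frac{t_2}{t_1}\right)^n.
$$
This reproduces the identity displayed immediately before the statement,
$$
\sum_{r=0}^n(-1)^{r+n}\zeta(\{1\}^r,n+2-r)=\frac1{n!}\int_{E_2}\left(\log\frac1{1-t_1}-\log\frac{t_2}{t_1}\right)^n\frac{dt_1dt_2}{(1-t_1)t_2}.
$$

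Finally I would apply Theorem~\ref{thm3.2} with $r=0$: there the factor $\left(\log\frac{1-t_1}{1-t_2}\right)^0=1$, so the left-hand side becomes precisely the integral above, while the right-hand side equals $\zeta^\star(2,\{2\}^m)$ when $n=2m$ and $0$ when $n=2m+1$. Since $\zeta^\star(2,\{2\}^m)=\zeta^\star(\{2\}^{m+1})$, the proposition follows. I do not anticipate a genuine obstacle: all the analytic content is already carried by Theorems~\ref{thm2.2} and~\ref{thm3.2}, and the only points requiring a moment's care are the sign identity $(-1)^{r+n}=(-1)^{n-r}$ and the verification that the $r=n$ boundary term is covered by the integral representation.
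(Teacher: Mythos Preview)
Your proposal is correct and follows essentially the same route as the paper: express each $\zeta(\{1\}^r,n+2-r)$ as a double integral, collapse the sum via the binomial theorem, and then apply Theorem~\ref{thm3.2} with $r=0$. The paper states this argument in a single sentence just before the proposition; your version merely spells out the binomial step and the sign check $(-1)^{r+n}=(-1)^{n-r}$ in more detail.
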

\begin{proposition} \label{pro4.3} 
Let $n$ be a nonnegative integer. Then
\begin{equation}\label{eq4.3} 
\sum^n_{r=0}(-1)^{r+n}(r+1)\zeta(\{1\}^{r+1},n+2-r)
=\left\{\begin{array}{ll}
(m+1)\zeta^\star(\{2\}^{m+2}), &\mbox{if }n=2m+1,\\
\displaystyle\sum_{a+b=m}\zeta^\star(\{2\}^a,3,\{2\}^b), &\mbox{if }n=2m.
\end{array}\right.
\end{equation}
\end{proposition}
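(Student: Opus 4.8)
The plan is to follow the template of the proof of Proposition~\ref{pro4.2}: first convert the left-hand side into a double integral over $E_2$, then package all $n$ into one generating function, evaluate that generating function in closed form by a Beta-integral computation, and finally read off the coefficients with the help of Proposition~\ref{pro4.1}. Since $\zeta(\{1\}^{r+1},n+2-r)=G_{(n-r)+2}(r+1,0)$, formula~(\ref{eq2.3}) with $q=0$ gives
$$
(r+1)\,\zeta(\{1\}^{r+1},n+2-r)=\frac1{r!\,(n-r)!}\int_{E_2}\left(\log\frac1{1-t_1}\right)^{r+1}\left(\log\frac{t_2}{t_1}\right)^{n-r}\frac{dt_1dt_2}{(1-t_1)t_2}.
$$
Multiplying by $(-1)^{r+n}$, summing over $0\le r\le n$, and applying the binomial theorem to $\sum_r\binom nr\left(\log\frac1{1-t_1}\right)^r\left(-\log\frac{t_2}{t_1}\right)^{n-r}$, the sum in the statement becomes
$$
S_n:=\frac1{n!}\int_{E_2}\log\frac1{1-t_1}\left(\log\frac1{1-t_1}-\log\frac{t_2}{t_1}\right)^{n}\frac{dt_1dt_2}{(1-t_1)t_2}.
$$

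Next, put $\Psi(x):=\sum_{n\ge0}S_nx^n$ and
$$
J(u,x)=\int_{E_2}(1-t_1)^{-u}\left(\frac{t_1}{t_2}\right)^{x}\frac{dt_1dt_2}{(1-t_1)t_2},
$$
so that $\Psi(x)=\dfrac{\partial}{\partial u}J(u,x)\Big|_{u=x}$, because $\partial_u(1-t_1)^{-u}\big|_{u=x}=(1-t_1)^{-x}\log\frac1{1-t_1}$ and $(1-t_1)^{-x}(t_1/t_2)^x=\exp\!\big(x(\log\frac1{1-t_1}-\log\frac{t_2}{t_1})\big)$. Carrying out the inner integration over $t_2$ and evaluating the two Beta integrals that result (as in the proof of Proposition~\ref{pro2.1}) gives
$$
J(u,x)=\frac1{ux}\left(\frac{\Gamma(1+x)\Gamma(1-u)}{\Gamma(1+x-u)}-1\right).
$$
Differentiating in $u$, setting $u=x$, and using $\Gamma(1+x)\Gamma(1-x)=\pi x/\sin\pi x$ together with $-\gamma-\psi(1-x)=\sum_{j\ge1}\zeta(j+1)x^j$ (a consequence of the product formula for $1/\Gamma$ used in the proof of Proposition~\ref{pro3.1}), I expect to obtain
$$
\Psi(x)=\frac{1-Z(x)}{x^3}+\frac{Z(x)\,W(x)}{x^2},\qquad Z(x):=\frac{\pi x}{\sin\pi x}=\sum_{m\ge0}\zeta^\star(\{2\}^m)x^{2m},\quad W(x):=\sum_{j\ge1}\zeta(j+1)x^j;
$$
the two $x^{-1}$-terms on the right cancel, so $\Psi$ is a genuine power series.

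Then I extract coefficients. Since $Z$ is even and $1-Z(x)=O(x^2)$ has only even powers, $\dfrac{1-Z(x)}{x^3}$ contributes only odd powers of $x$. Writing $W=W_{\mathrm{ev}}+W_{\mathrm{odd}}$ with $W_{\mathrm{ev}}(x)=\sum_{i\ge1}\zeta(2i+1)x^{2i}$ and $W_{\mathrm{odd}}(x)=\sum_{i\ge1}\zeta(2i)x^{2i-1}$, one reads off
$$
S_{2m}=[x^{2m+2}]\,Z(x)W_{\mathrm{ev}}(x)=\sum_{a+b=m}\zeta^\star(\{2\}^a)\zeta(2b+3),
$$
$$
S_{2m+1}=-\zeta^\star(\{2\}^{m+2})+[x^{2m+3}]\,Z(x)W_{\mathrm{odd}}(x)=-\zeta^\star(\{2\}^{m+2})+\sum_{a+b=m+1}\zeta^\star(\{2\}^a)\zeta(2b+2).
$$
Proposition~\ref{pro4.1} now finishes both cases: with $s=2,\ r=3$ it turns $\sum_{a+b=m}\zeta^\star(\{2\}^a)\zeta(2b+3)$ into $\sum_{a+b=m}\zeta^\star(\{2\}^a,3,\{2\}^b)$, which is the case $n=2m$; with $s=2,\ r=2$, together with $\zeta^\star(\{2\}^a,2,\{2\}^b)=\zeta^\star(\{2\}^{a+b+1})$, it turns $\sum_{a+b=m+1}\zeta^\star(\{2\}^a)\zeta(2b+2)$ into $(m+2)\zeta^\star(\{2\}^{m+2})$, whence $S_{2m+1}=(m+1)\zeta^\star(\{2\}^{m+2})$.

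The step I expect to be delicate is the closed-form evaluation in the middle: keeping the Beta-function bookkeeping straight (in particular the identity $\Gamma(-u)=-\Gamma(1-u)/u$) and then carrying out $\partial_u|_{u=x}$ so that the spurious simple pole at $x=0$ really cancels, leaving an honest power series. If one prefers to avoid the digamma function, an alternative is to split $\log\frac1{1-t_1}=(\log\frac1{1-t_1}-\log\frac{t_2}{t_1})+\log\frac{t_2}{t_1}$ inside $S_n$: the first summand is handled at once by Proposition~\ref{pro4.2} applied with exponent $n+1$, but the second one is again a signed sum of height-one multiple zeta values, so a further generating-function (or inductive) input would still be needed to close it.
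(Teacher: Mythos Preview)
Your proposal is correct, and the overall architecture (integral representation $\to$ two-parameter generating integral $\to$ coefficient extraction $\to$ Proposition~\ref{pro4.1}) matches the paper's. The implementation, however, is genuinely different in the middle step. The paper introduces $M(x,y)=\int_{E_2}(t_1/t_2)^x(1-t_1)^{-x-y}\frac{dt_1dt_2}{(1-t_1)t_2}$, applies the duality change of variables $t_1=1-u_2$, $t_2=1-u_1$, and evaluates the result as an infinite sum $\sum_{k\ge1}\Gamma(k+x)\Gamma(k-x-y)/(\Gamma(k+1)\Gamma(k+1-y))$; after $\partial_y|_{y=0}$ this brings in $\psi(k+1)-\psi(k-x)$ and the product expansion of Eq.~(\ref{eq3.3}), and the coefficients are read off term by term in $k$. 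You instead evaluate the integral globally by two Beta integrals to obtain the closed form $J(u,x)=\frac{1}{ux}\bigl(\Gamma(1+x)\Gamma(1-u)/\Gamma(1+x-u)-1\bigr)$, differentiate once, and land directly on the single generating function $\Psi(x)=(1-Z(x))/x^3+Z(x)W(x)/x^2$ with $Z(x)=\pi x/\sin\pi x$; the coefficient extraction is then immediate from the parity of $Z$ and the even/odd splitting of $W$. Your route is shorter and avoids both the duality substitution and the $k$-indexed digamma machinery; the paper's route keeps closer contact with the Section~3 framework (Proposition~\ref{pro3.1} and Eq.~(\ref{eq3.3})), which is reused elsewhere. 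Both arrive at the same intermediate sums $\sum_a\zeta^\star(\{2\}^a)\zeta(\cdots)$ before invoking Proposition~\ref{pro4.1}.
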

\begin{proof}
The alternating sum has the integral representation
$$
\frac1{n!}\int_{E_2}\left(\log\frac1{1-t_1}\right)
\left(\log\frac1{1-t_1}-\log\frac{t_2}{t_1}\right)^n
\frac{dt_1dt_2}{(1-t_1)t_2}.
$$
Consider the following function
$$
M(x,y)=\int_{E_2}\left(\frac{t_1}{t_2}\right)^x(1-t_1)^{-x-y}
\frac{dt_1dt_2}{(1-t_1)t_2}.
$$
It can be seen that 
$$
\frac1{n!}\left(\frac\partial{\partial x}\right)^n
\left(\frac\partial{\partial y}\right)M(x,y)\Big|_{x=0,y=0}
=\frac1{n!}\int_{E_2}\left(\log\frac1{1-t_1}\right)
\left(\log\frac1{1-t_1}-\log\frac{t_2}{t_1}\right)^n
\frac{dt_1dt_2}{(1-t_1)t_2}.
$$
With the change of variables $t_1=1-u_2$ and $t_2=1-u_1$, 
we have the dual integral representation
$$
\int_{E_2}\left(\frac{1-u_2}{1-u_1}\right)^xu_2^{-x-y}
\frac{du_1du_2}{(1-u_1)u_2}.
$$
It can be evaluated as
$$
M(x,y)=\sum^\infty_{k=1}
\frac{\Gamma(k+x)\Gamma(k-x-y)}{\Gamma(k+1)\Gamma(k+1-y)}.
$$
So that 
$$
\frac\partial{\partial y}M(x,y)\Big|_{y=0}
=\sum^\infty_{k=1}\frac{\Gamma(k+x)\Gamma(k-x)}{\Gamma(k+1)^2}
\cdot\left[\psi(k+1)-\psi(k-x)\right],
$$
where $\psi(x)$ is the digamma function 
$$
\psi(x)=\frac{d}{dx}\log\Gamma(x)=\frac{\Gamma'(x)}{\Gamma(x)}.
$$
By Eq.\,(\ref{eq3.3}) and for a positive integer $p$ 
$$
\left(\frac{d}{dx}\right)^p\psi(x)=(-1)^{p-1}p!\zeta(p+1;x),
$$
 we have
 \begin{eqnarray*}
\lefteqn{ \frac1{n!}\left(\frac\partial{\partial x}\right)^n
\left(\frac\partial{\partial y}\right)M(x,y)\Big|_{x=0,y=0}}\\
&=&
\sum^\infty_{k=1}\frac1{k^2}\sum^n_{r=0}{n\choose r}
\frac{d^r}{dx^r}\left(\prod_{n\geq k}\left(1-\frac{x^2}{n^2}\right)^{-1}\right)
\cdot
\frac{d^{n-r}}{dx^{n-r}}\left[\psi(k+1)-\psi(k-x)\right]\Big|_{x=0}\\
&=&\left\{\begin{array}{ll}
\displaystyle\sum^{m+1}_{r=0}\zeta^\star(\{2\}^r)\zeta(2m+4-2r),&\mbox{if }n=2m+1,\\
\displaystyle\sum^m_{r=0}\zeta^\star(\{2\}^r)\zeta(2m+3-2r),&\mbox{if }n=2m.
\end{array}\right.
\end{eqnarray*}
Applying Proposition \ref{pro4.1} we conclude the results.
\end{proof}
We use Eq.\,(\ref{eq3.4}) with $n=2m+1$ and decompose 
$\displaystyle\log\left(\frac{1-t_1}{1-t_2}\right)
=\log\frac1{1-t_2}-\log\frac1{1-t_1}$, we have
\begin{eqnarray*}
\lefteqn{\frac1{(2n+1)!}\int_{E_2}\left(\log\frac1{1-t_1}\right)
\left(\log\frac1{1-t_1}-\log\frac{t_2}{t_1}\right)^{2n+1}
\frac{dt_1dt_2}{(1-t_1)t_2}}\\
&=&\frac1{(2n+1)!}\int_{E_2}\left(\log\frac1{1-t_2}\right)
\left(\log\frac1{1-t_1}-\log\frac{t_2}{t_1}\right)^{2n+1}
\frac{dt_1dt_2}{(1-t_1)t_2}.
\end{eqnarray*}
In terms of multiple zeta values it is 
$$
\sum_{p+q=2n+1}(-1)^q(p+1)\zeta(\{1\}^{p+1},q+2)
=\sum_{p+q=2n+1}(-1)^qG_{q+2}(p,1).
$$
The alternating sum in the left is equal to 
$(n+1)\zeta^\star(\{2\}^{n+2})$ while the sum in the 
right is equal to 
\begin{eqnarray*}
\lefteqn{\sum_{p+q=2n+1}\zeta^\star(\{1\}^{p+1},q+2)
-\sum_{0\leq a+b\leq 2n}(-1)^b\zeta(a+2)\zeta(\{1\}^{2n-a-b},b+2)}\\
&=&\sum_{p+q=2n+2}\zeta^\star(\{1\}^p,q+2)-\sum_{c+d=n+1}\zeta^\star(\{2\}^c)\zeta(2d+2)\\
&=&\sum_{p+q=2n+2}\zeta^\star(\{1\}^p,q+2)-(n+2)\zeta^\star(\{2\}^{n+2}).
\end{eqnarray*}
This leads to 
\begin{equation}\label{eq4.4} 
\sum_{p+q=2n+2}\zeta^\star(\{1\}^p,q+2)=(2n+3)\zeta^\star(\{2\}^{n+2}).
\end{equation}
Since
$$
\zeta^\star(\{2\}^{n+2})=2\left(1-\frac1{2^{2n+3}}\right)\zeta(2n+4),
$$
we have
$$
\sum_{p+q=2n+2}\zeta^\star(\{1\}^p,q+2)
=2(2n+3)\left(1-\frac1{2^{2n+3}}\right)\zeta(2n+4).
$$
Hence we can conclude the above result as a proposition.
\begin{proposition} \label{pro4.4} 
For a nonnegative integer $n$, 
\begin{equation}\label{eq4.5} 
\sum_{p+q=2n+2}\zeta^\star(\{1\}^p,q+2)
=2(2n+3)\left(1-\frac1{2^{2n+3}}\right)\zeta(2n+4).
\end{equation}
\end{proposition}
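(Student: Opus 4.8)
By the known evaluation $\zeta^\star(\{2\}^{n+2})=2\bigl(1-2^{-(2n+3)}\bigr)\zeta(2n+4)$, the asserted formula is equivalent to
$$
\sum_{p+q=2n+2}\zeta^\star(\{1\}^p,q+2)=(2n+3)\,\zeta^\star(\{2\}^{n+2}),
$$
namely \eqref{eq4.4}; so the plan is to prove this reduced identity and then substitute. The engine is the parity vanishing in Theorem~\ref{thm3.2}: with $r=1$ and the odd exponent $2n+1$, the left side of \eqref{eq3.4} is an integral carrying the factor $\log\frac{1-t_1}{1-t_2}$ and it equals $0$; writing $\log\frac{1-t_1}{1-t_2}=\log\frac1{1-t_2}-\log\frac1{1-t_1}$ turns this into the equality $A=B$, where
$$
A=\frac1{(2n+1)!}\int_{E_2}\Bigl(\log\tfrac1{1-t_1}\Bigr)\Bigl(\log\tfrac1{1-t_1}-\log\tfrac{t_2}{t_1}\Bigr)^{2n+1}\frac{dt_1dt_2}{(1-t_1)t_2}
$$
and $B$ is the same integral with $\log\frac1{1-t_2}$ in place of the leading $\log\frac1{1-t_1}$.

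Next I would evaluate both sides. Expanding the $(2n+1)$st power by the binomial theorem and identifying each term through \eqref{eq2.3} (with second argument $0$, resp.\ $1$) gives
$$
A=\sum_{p+q=2n+1}(-1)^q(p+1)\,\zeta(\{1\}^{p+1},q+2),\qquad B=\sum_{p+q=2n+1}(-1)^q\,G_{q+2}(p,1),
$$
and by the odd case of Proposition~\ref{pro4.3}, $A=(n+1)\,\zeta^\star(\{2\}^{n+2})$. To convert $B$ into $\zeta^\star$'s I apply, for $q\ge1$, the reflection formula Proposition~\ref{pro2.4} with its second parameter taken to be $1$; since $G_{k+3}(0,p)=\zeta^\star(\{1\}^p,k+3)$, this yields $(-1)^qG_{q+2}(p,1)=\zeta^\star(\{1\}^{p+1},q+2)+(-1)^q\sum_{a+b=q-1}(-1)^b\zeta(\{1\}^p,a+2)\zeta(b+2)$, and the boundary term $G_2(2n+1,1)=(2n+3)\zeta(2n+3)=\zeta^\star(\{1\}^{2n+2},2)$ (by Proposition~\ref{pro2.5}) fits the same pattern. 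Summing over $p+q=2n+1$ and reindexing the main family,
$$
B=\sum_{p+q=2n+2}\zeta^\star(\{1\}^p,q+2)-\zeta(2n+4)-S,\qquad S:=\sum_{0\le a+b\le 2n}(-1)^b\zeta(a+2)\,\zeta(\{1\}^{2n-a-b},b+2).
$$
Since by Proposition~\ref{pro4.1} ($s=r=2$) one has $\sum_{c+d=n+1}\zeta^\star(\{2\}^c)\zeta(2d+2)=(n+2)\zeta^\star(\{2\}^{n+2})$, equating $A=B$ shows that \eqref{eq4.4} is equivalent to the auxiliary identity $\zeta(2n+4)+S=\sum_{c+d=n+1}\zeta^\star(\{2\}^c)\zeta(2d+2)$.

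The main obstacle is thus twofold: keeping the signs and index shifts straight in the reflection step, and proving this auxiliary identity. For the latter I would slice $S$ by fixing $a$: after the substitution $r=2n-a-b$, the inner sum $\sum_{b=0}^{2n-a}(-1)^b\zeta(\{1\}^{2n-a-b},b+2)$ becomes precisely the alternating sum of Proposition~\ref{pro4.2}, which vanishes when $2n-a$ is odd and equals $\zeta^\star(\{2\}^{(2n-a)/2+1})$ when $2n-a$ is even; hence only even $a=2c$ contribute and $S=\sum_{c=0}^n\zeta(2c+2)\,\zeta^\star(\{2\}^{n+1-c})$. Comparing with $\sum_{c+d=n+1}\zeta^\star(\{2\}^c)\zeta(2d+2)$, whose $c=0$ summand is exactly $\zeta(2n+4)$, then gives the identity. (Alternatively, $B$ could be evaluated in closed form directly, paralleling the Beta-integral argument in the proof of Proposition~\ref{pro4.3}, by differentiating $\int_{E_2}(1-t_2)^{-y}(t_1/t_2)^x(1-t_1)^{-x}\frac{dt_1dt_2}{(1-t_1)t_2}$ once in $y$ and $2n+1$ times in $x$ at the origin.) Inserting the closed form for $\zeta^\star(\{2\}^{n+2})$ into \eqref{eq4.4} finally produces the stated value $2(2n+3)\bigl(1-2^{-(2n+3)}\bigr)\zeta(2n+4)$.
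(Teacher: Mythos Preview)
Your proposal is correct and follows essentially the same route as the paper: apply the odd-case vanishing of Theorem~\ref{thm3.2} with $r=1$, evaluate the $\log\frac1{1-t_1}$ side via Proposition~\ref{pro4.3}, rewrite the $\log\frac1{1-t_2}$ side via the reflection formula (Proposition~\ref{pro2.4}) and Proposition~\ref{pro2.5}, and collapse the resulting double sum $S$ with Proposition~\ref{pro4.2}. Your bookkeeping is in fact a bit more explicit than the paper's, which silently lets the extra $\zeta(2n+4)$ coming from the reindexing $p+1\mapsto p$ cancel against the $c=0$ term of $\sum_{c+d=n+1}\zeta^\star(\{2\}^c)\zeta(2d+2)$; you separate these two contributions and show they match, which is the cleaner way to present it.
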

As a matter of fact, 
this result is just a special case of \cite[Theorem 1]{AO} which was proved by Aoki and Oho.
\begin{equation}\label{eq4.6} 
\sum_{{\bf k}\in I_0(k,s)}\zeta^\star({\bf k})
=2{k-1\choose 2s-1}(1-2^{1-k})\zeta(k),
\end{equation}
where $I_0(k,s)$ is the set of admissible multi-indices
${\bf k}=(k_1,k_2,\ldots,k_n)$ with weight $k=k_1+k_2+\cdots+k_n$
and height $s=\#\{i\mid k_i>1\}$. We include this result as an application of the function 
$G_{n+2}(p,q)$.
\section{Sum formulas of $G_{n+2}(p,q)$}
\begin{proposition} \label{pro5.1} 
Let $n$ be a nonnegative integer. Then 
$$
\sum_{p+q=2n}(-1)^qG_{q+2}(p,2)
=\sum_{p+q=2n+1}G_{q+2}(1,p)
-\sum_{a+b=n}\zeta(1,2a+3)\zeta^\star(\{2\}^b).
$$
\end{proposition}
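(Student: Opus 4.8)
The plan is to deduce this sum formula from the reflection formula of Proposition~\ref{pro2.4} together with the height-one evaluation of Proposition~\ref{pro4.2}; every term here has weight $2n+4$ and the whole argument is a chain of reindexings. First I would feed $G_{q+2}(p,2)$ into Proposition~\ref{pro2.4} by taking its first argument to be $2$: renaming its ``$q$'' as $Q$ and its ``$k$'' as $K$ and matching $G_{K+3}(Q-1,2)$ against $G_{q+2}(p,2)$ forces $Q=p+1$ and $K=q-1$, so that for every $q\geq1$
$$
(-1)^{q}G_{q+2}(p,2)=G_{q+2}(1,p+1)-\sum_{a+b=q-1}(-1)^{b}\zeta(1,a+2)\zeta(\{1\}^{p},b+2).
$$
Summing this over $p+q=2n$ with $q\geq1$ and putting $p'=p+1$ in the first term on the right rewrites $\sum_{p+q=2n,\,q\geq1}(-1)^{q}G_{q+2}(p,2)$ as $\sum_{p'+q=2n+1,\,p',q\geq1}G_{q+2}(1,p')$ minus the triple sum $S:=\sum_{a+b+p=2n-1}(-1)^{b}\zeta(1,a+2)\zeta(\{1\}^{p},b+2)$.

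Next I would reinstate the boundary terms. The term dropped from the left-hand side of the proposition (the $q=0$ term) is $G_2(2n,2)$, which by the symmetry $G_2(p,q)=G_2(q-1,p+1)$ of Proposition~\ref{pro2.5} equals $G_2(1,2n+1)$ --- precisely the $q=0$ term that completes $\sum_{p'+q=2n+1,\,p',q\geq1}G_{q+2}(1,p')$ to the full sum $\sum_{p+q=2n+1}G_{q+2}(1,p)$, whose only remaining boundary term is the $p=0$ term $G_{2n+3}(1,0)=\zeta(1,2n+3)$. Hence
$$
\sum_{p+q=2n}(-1)^{q}G_{q+2}(p,2)=\sum_{p+q=2n+1}G_{q+2}(1,p)-\zeta(1,2n+3)-S,
$$
and it remains to show that $\zeta(1,2n+3)+S=\sum_{a+b=n}\zeta(1,2a+3)\zeta^\star(\{2\}^{b})$.

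Finally I would collapse $S$ using Proposition~\ref{pro4.2}. Grouping $S$ by the value of $a$, the inner sum --- over $b,p\geq0$ with $b+p=L:=2n-1-a$ --- is $\sum_{p=0}^{L}(-1)^{p+L}\zeta(\{1\}^{p},L+2-p)$, which is exactly the left-hand side of~(\ref{eq4.2}) with $n$ replaced by $L$. By Proposition~\ref{pro4.2} it vanishes unless $L$ is even, i.e. unless $a$ is odd, say $a=2c+1$ with $0\leq c\leq n-1$, in which case it equals $\zeta^\star(\{2\}^{n-c})$. Therefore $S=\sum_{c=0}^{n-1}\zeta(1,2c+3)\zeta^\star(\{2\}^{n-c})$, and adjoining $\zeta(1,2n+3)=\zeta(1,2n+3)\,\zeta^\star(\{2\}^{0})$ supplies the missing $a=n$ term and gives the identity. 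I do not foresee a real obstacle: the argument is a sequence of substitutions, and the only points needing care are isolating the $q=0$ and $p=0$ boundary terms and noticing their cancellation through Proposition~\ref{pro2.5}, and getting the parity right when recognizing the inner sum as the left-hand side of~(\ref{eq4.2}), so that exactly the odd values of $a$ survive in $S$.
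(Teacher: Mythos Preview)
Your proposal is correct and follows essentially the same route as the paper's own proof: both apply the reflection formula (Proposition~\ref{pro2.4}) to convert $G_{q+2}(p,2)$ into $G_{q+2}(1,p+1)$ for $q\ge 1$, use the symmetry $G_2(p,q)=G_2(q-1,p+1)$ of Proposition~\ref{pro2.5} to match the $q=0$ boundary term, then reindex the remaining triple sum and collapse it via Proposition~\ref{pro4.2}, finally absorbing $G_{2n+3}(1,0)=\zeta(1,2n+3)$ as the $b=0$ term of $\sum_{a+b=n}\zeta(1,2a+3)\zeta^\star(\{2\}^{b})$. Your write-up simply makes the boundary bookkeeping and the parity check in~(\ref{eq4.2}) more explicit than the paper does.
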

\begin{proof}
By Proposition \ref{pro2.4} and \ref{pro2.5}, the left hand side of the above identity is
\begin{eqnarray*}
\lefteqn{\sum_{p+q=2n}(-1)^qG_{q+2}(p,2)}\\
&=&\sum^{2n}_{q=0}G_{2+q}(1,2n+1-q)
-\sum^{2n}_{q=1}\sum^{q-1}_{a=0}
(-1)^{q-1-a}\zeta(1,a+2)\zeta(\{1\}^{2n-q},q+1-a).
\end{eqnarray*}
After a suitable change the variable of indices and their orders,
we can reach the following  
$$
\sum^{2n}_{q=0}G_{2+q}(1,2n+1-q)
-\sum^{2n-1}_{\ell=0}\sum^{\ell}_{r=0}
(-1)^{r+\ell}\zeta(1,2n+1-\ell)\zeta(\{1\}^{r},\ell+2-r).
$$
By Proposition \ref{pro4.2} we have
$$
\sum_{p+q=2n}(-1)^qG_{q+2}(p,2)
=\sum^{2n}_{q=0}G_{2+q}(1,2n+1-q)
-\sum^n_{a=1}\zeta^\star(\{2\}^a)\zeta(1,2n+3-2a).
$$
Since $G_{2n+3}(1,0)=\zeta(1,2n+3)$, hence we complete the proof.
\end{proof}
\begin{proposition} \label{pro5.2} 
\begin{eqnarray*}
\lefteqn{\sum_{p+q=2n}(-1)^q(p+1)G_{q+2}(p+1,1)
=\sum_{p+q=2n}(p+1)\zeta^\star(\{1\}^{p+2},q+2)} \\
&&-\sum_{a+b=n}b\cdot\zeta(2a+2)\zeta^\star(\{2\}^{b+1})
-\sum_{a+b+c=n-1}\zeta^\star(\{2\}^a,3,\{2\}^b)\zeta(2c+3).
\end{eqnarray*}
\end{proposition}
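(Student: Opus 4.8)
The plan is to follow the template of the proof of Proposition~\ref{pro5.1}: use the reflection formula of Proposition~\ref{pro2.4} (together with the closed form of $G_2$ in Proposition~\ref{pro2.5} for the boundary index) to rewrite each $G_{q+2}(p+1,1)$ as an explicit ``main term'' plus a sum of products of a height-one multiple zeta value with a single zeta value, and then to collapse that remaining sum by means of the height-one evaluation of Proposition~\ref{pro4.3}.

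Concretely, I would first split off the term $q=0$ in $\sum_{p+q=2n}(-1)^q(p+1)G_{q+2}(p+1,1)$; by Proposition~\ref{pro2.5} it equals $(2n+1)G_2(2n+1,1)=(2n+1)(2n+3)\zeta(2n+4)=(2n+1)\zeta^\star(\{1\}^{2n+2},2)$, which is exactly the $q=0$ summand of the target sum $\sum_{p+q=2n}(p+1)\zeta^\star(\{1\}^{p+2},q+2)$. For $q\geq1$, applying Proposition~\ref{pro2.4} with its parameters specialized to $p+2$, $1$ and $q-1$ gives
$$
G_{q+2}(p+1,1)=(-1)^{q}\zeta^\star(\{1\}^{p+2},q+2)+\sum_{a+b=q-1}(-1)^{b}\zeta(\{1\}^{p+1},a+2)\zeta(b+2),
$$
where I have used $G_{q+2}(0,p+2)=\zeta^\star(\{1\}^{p+2},q+2)$ and $\zeta(\{1\}^{0},b+2)=\zeta(b+2)$. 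Multiplying by $(-1)^{q}(p+1)$ and summing over $p+q=2n$ with $q\geq1$, the contribution of the first term, together with the $q=0$ boundary term just computed, assembles into the full sum $\sum_{p+q=2n}(p+1)\zeta^\star(\{1\}^{p+2},q+2)$. Hence the proposition reduces to showing that $S$, defined by
$$
S=\sum_{\substack{p+q=2n\\ q\geq1}}(-1)^{q}(p+1)\sum_{a+b=q-1}(-1)^{b}\zeta(\{1\}^{p+1},a+2)\zeta(b+2),
$$
equals
$$
-\sum_{a+b=n}b\,\zeta(2a+2)\zeta^\star(\{2\}^{b+1})-\sum_{a+b+c=n-1}\zeta^\star(\{2\}^a,3,\{2\}^b)\zeta(2c+3).
$$

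To establish this I would reindex: putting $q=a+b+1$ turns the constraint into $p+a+b=2n-1$ and the sign factor becomes $(-1)^{q}(-1)^{b}=-(-1)^{a}$, so
$$
S=-\sum_{b=0}^{2n-1}\zeta(b+2)\sum_{p+a=2n-1-b}(-1)^{a}(p+1)\zeta(\{1\}^{p+1},a+2).
$$
With $N:=2n-1-b$ and $r:=p$ (so $a=N-r$ and $(-1)^{a}=(-1)^{r+N}$), the inner sum is precisely the left-hand side of Proposition~\ref{pro4.3}. I would then substitute its value according to the parity of $N$: when $b$ is even, say $b=2c$, one has $N=2(n-1-c)+1$ and the inner sum equals $(n-c)\zeta^\star(\{2\}^{n-c+1})$, which after the substitution $a=c,\ b=n-c$ collects into $-\sum_{a+b=n}b\,\zeta(2a+2)\zeta^\star(\{2\}^{b+1})$ (the $b=0$ term vanishing); when $b$ is odd, say $b=2c+1$, one has $N=2(n-1-c)$ and the inner sum equals $\sum_{a'+b'=n-1-c}\zeta^\star(\{2\}^{a'},3,\{2\}^{b'})$, so these terms collect into $-\sum_{a+b+c=n-1}\zeta^\star(\{2\}^a,3,\{2\}^b)\zeta(2c+3)$. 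Adding the two contributions to the main term yields the asserted identity.

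The genuinely delicate point is the combinatorial bookkeeping rather than any hard analysis: one must check that the lone $q=0$ term produced by Proposition~\ref{pro2.5} is exactly the summand missing from the first sum on the right (so that it merges in cleanly and no stray term survives), and one must track the parity split carefully so that the single zeta factor $\zeta(b+2)$ attaches to $\zeta(2a+2)$ in the first correction sum and to $\zeta(2c+3)$ in the second. Both inputs --- the reflection formula of Proposition~\ref{pro2.4} and the height-one sum formula of Proposition~\ref{pro4.3} --- are already available, so once the indices are lined up the remainder of the proof is a mechanical assembly.
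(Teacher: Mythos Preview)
Your proposal is correct and follows essentially the same route as the paper's own proof: both apply the reflection formula of Proposition~\ref{pro2.4} (with Proposition~\ref{pro2.5} handling the $q=0$ boundary) to convert each $G_{q+2}(p+1,1)$ into $\zeta^\star(\{1\}^{p+2},q+2)$ plus a double sum of products $\zeta(\{1\}^{p+1},a+2)\zeta(b+2)$, and then reindex that double sum so that the inner sum is exactly the left-hand side of Proposition~\ref{pro4.3}, whose parity cases produce the two correction terms. The only cosmetic difference is that the paper uses the reflection formula with the roles of the two arguments swapped relative to your specialization, which just interchanges the labels $a$ and $b$ in the intermediate sum.
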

\begin{proof}
We first use Proposition \ref{pro2.4} and \ref{pro2.5} 
such that the sum of $G_{q+2}(p+1,1)$ in the left hand side of the 
above identity becomes
$$
\sum_{p+q=2n}(p+1)G_{q+2}(0,p+2)
-\sum^{2n}_{q=1}\sum^{q-1}_{a=0}(-1)^{q-1-a}(2n+1-q)\zeta(a+2)
\zeta(\{1\}^{2n+1-q},q+1-a).
$$
Since $G_{q+2}(0,p+2)=\zeta^\star(\{1\}^{p+2},q+2)$,
we get the desired first factor in the right hand side of the above identity. 

After a suitable change the variable of indices and their orders
in the second factor, this factor can reach the following  
$$
\sum^{2n-1}_{\ell=0}\zeta(2n+1-\ell)
\sum^\ell_{r=0}(-1)^{\ell+r}(r+1)\zeta(\{1\}^{r+1},\ell+2-r).
$$
Using Proposition \ref{pro4.3} we get the desired remaining terms.
\end{proof}
\begin{theorem} \label{thm5.3} 
For a nonnegative integer $n$, we have
\begin{eqnarray*}
\lefteqn{\sum_{p+q=2n+1}(p+1)\zeta^\star(\{1\}^{p+1},q+2)
-\sum_{p+q=2n+1}G_{q+2}(1,p)}\\
&=&{2n+4\choose 3}\zeta(2n+4)+\sum^n_{j=1}(-1)^j
\sum_{|c_j|=2n+1-2j}\zeta(c_{j0}+3,c_{j1}+2,\ldots,c_{jj}+2)W(c_j)
\end{eqnarray*}
with 
$$
W(c_j)=W(c_{j0},c_{j1},\ldots,c_{jj})={c_{j0}+3\choose 3}(c_{j1}+1)\cdots(c_{jj}+1).
$$
\end{theorem}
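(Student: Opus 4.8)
The plan is to prove the identity by showing that both sides equal the coefficient of $z^{2n+1}$ in one and the same analytic function near $z=0$, in the spirit of Theorems~\ref{thm2.2} and~\ref{thm3.2}. First I would turn the left-hand side into a double integral. By the second formula of Theorem~\ref{thm2.2}, $\zeta^\star(\{1\}^{p+1},q+2)=G_{q+2}(0,p+1)=\frac1{(p+1)!\,q!}\int_{E_2}\left(\log\frac1{1-t_2}\right)^{p+1}\left(\log\frac{t_2}{t_1}\right)^q\frac{dt_1dt_2}{(1-t_1)t_2}$, hence $(p+1)\zeta^\star(\{1\}^{p+1},q+2)=\frac1{p!\,q!}\int_{E_2}\left(\log\frac1{1-t_2}\right)^{p+1}\left(\log\frac{t_2}{t_1}\right)^q\frac{dt_1dt_2}{(1-t_1)t_2}$, while $G_{q+2}(1,p)=\frac1{p!\,q!}\int_{E_2}\left(\log\frac1{1-t_1}\right)\left(\log\frac1{1-t_2}\right)^p\left(\log\frac{t_2}{t_1}\right)^q\frac{dt_1dt_2}{(1-t_1)t_2}$. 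Summing over $p+q=2n+1$ and using $\sum_{p+q=2n+1}\frac1{p!q!}A^pB^q=\frac1{(2n+1)!}(A+B)^{2n+1}$ with $A=\log\frac1{1-t_2}$ and $B=\log\frac{t_2}{t_1}$, the left-hand side of the theorem becomes
$$
\frac1{(2n+1)!}\int_{E_2}\log\frac{1-t_1}{1-t_2}\left(\log\frac1{1-t_2}+\log\frac{t_2}{t_1}\right)^{2n+1}\frac{dt_1dt_2}{(1-t_1)t_2}=\frac1{(2n+1)!}\,\partial_x^{2n+1}\partial_y\Phi(x,y)\Big|_{x=y=0},
$$
where $\Phi(x,y)=\int_{E_2}\left(\frac{t_2}{t_1}\right)^x(1-t_1)^y(1-t_2)^{-x-y}\frac{dt_1dt_2}{(1-t_1)t_2}$, because $\partial_x$ and $\partial_y$ bring down exactly the factors $\log\frac1{1-t_2}+\log\frac{t_2}{t_1}$ and $\log\frac{1-t_1}{1-t_2}$. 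After the substitution $t_1=1-u_2$, $t_2=1-u_1$ (as in the proof of Theorem~\ref{thm3.2}), $\Phi$ equals $\int_{0<u_1<u_2<1}(1-u_2)^{-x}(1-u_1)^{x-1}u_2^{y-1}u_1^{-x-y}\,du_1du_2$; integrating first in $u_1$ after expanding $(1-u_1)^{x-1}$ and then in $u_2$ as a Beta integral yields the closed form $\Phi(x,y)=\sum_{k\ge1}\frac1{k-x-y}\cdot\frac{\Gamma(k-x)^2}{\Gamma(k)\Gamma(k+1-2x)}$. Thus the left-hand side equals the coefficient of $x^{2n+1}$ in $\Psi(x):=\partial_y\Phi|_{y=0}=\sum_{k\ge1}\frac{\Gamma(k-x)^2}{(k-x)^2\,\Gamma(k)\Gamma(k+1-2x)}$.

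Next I would compute the right-hand side as a generating function. Writing $\zeta(c_0+3,c_1+2,\dots,c_j+2)=\sum_{0<k_1<\cdots<k_{j+1}}k_1^{-(c_0+3)}\prod_{i=1}^{j}k_{i+1}^{-(c_i+2)}$ and using $\sum_{m\ge0}\binom{m+3}{3}w^m=(1-w)^{-4}$ and $\sum_{m\ge0}(m+1)w^m=(1-w)^{-2}$, one finds for each fixed $j$ that $\sum_{|c_j|=N}\zeta(c_{j0}+3,\dots,c_{jj}+2)W(c_j)$ is the coefficient of $z^N$ in $\sum_{0<k_1<\cdots<k_{j+1}}\frac{k_1}{(k_1-z)^4}\prod_{i=2}^{j+1}\frac1{(k_i-z)^2}$. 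Inserting this into the sum over $j$ (the $j=0$ term giving $\binom{2n+4}{3}\zeta(2n+4)$), writing the coefficient of $z^{2n+1-2j}$ as the coefficient of $z^{2n+1}$ in $z^{2j}(\cdots)$, extending $j$ to infinity (the new terms not affecting the coefficient of $z^{2n+1}$), and collapsing the sum over $j$ by $\prod_{m>k}(1+u_m)=\sum_{j\ge0}\sum_{k<m_1<\cdots<m_j}u_{m_1}\cdots u_{m_j}$ with $u_m=-z^2/(m-z)^2$, the right-hand side becomes the coefficient of $z^{2n+1}$ in $R(z):=\sum_{k\ge1}\frac{k}{(k-z)^4}\prod_{m>k}\left(1-\frac{z^2}{(m-z)^2}\right)$.

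It then remains to prove $\Psi(z)=R(z)$. With $a_k=\frac{\Gamma(k-z)^2}{\Gamma(k)\Gamma(k+1-2z)}$ and $b_k=\prod_{m>k}\left(1-\frac{z^2}{(m-z)^2}\right)$ this reduces to the term-by-term identity $a_k(k-z)^2=k\,b_k$, since then $\frac{a_k}{(k-z)^2}=\frac{k\,b_k}{(k-z)^4}$. Both $\{a_k(k-z)^2\}_k$ and $\{k\,b_k\}_k$ satisfy the same recursion, $\frac{a_{k+1}(k+1-z)^2}{a_k(k-z)^2}=\frac{(k+1-z)^2}{k(k+1-2z)}=\frac{(k+1)b_{k+1}}{k\,b_k}$, and the base case $a_1(1-z)^2=b_1$ follows from the Weierstrass product $\prod_{m\ge1}\frac{m(m-2z)}{(m-z)^2}=\frac{\Gamma(1-z)^2}{\Gamma(1-2z)}$, which gives $b_1=\frac{\Gamma(2-z)^2}{\Gamma(2-2z)}=a_1(1-z)^2$. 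Hence $\Psi=R$, and comparing the coefficients of $z^{2n+1}$ completes the proof. I expect the main obstacle to be the closed-form evaluation of $\Phi(x,y)$ together with the recognition that the resulting $\Gamma$-quotient is exactly the tail product $b_k$ appearing on the right; once both sides are in product form the identification is a one-line recursion check. One must also justify the differentiation under the integral sign near $x=y=0$ and handle the boundary indices $p=0$ and $q=0$ carefully when passing to the integral representations.
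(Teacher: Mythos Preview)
Your proposal is correct and follows essentially the same route as the paper: you pass to the double integral via Theorem~\ref{thm2.2}, introduce a two-parameter generating function, apply the duality substitution $t_1=1-u_2$, $t_2=1-u_1$, evaluate the dual integral as a sum of $\Gamma$-quotients, and match this against the product $\prod_{m>k}\bigl(1-z^2/(m-z)^2\bigr)$. The only cosmetic difference is in the last step: the paper rewrites the $\Gamma$-quotient directly as that infinite product and then reads off the coefficient of $(-x)^{2n+1}$, whereas you compute the generating function of the right-hand side independently and verify the term-by-term equality $a_k(k-z)^2=k\,b_k$ by a one-line recursion together with the Weierstrass product for $\Gamma(1-z)^2/\Gamma(1-2z)$; both arguments amount to the same identity.
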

\begin{proof}
The difference in the left hand side has the integral representation
$$
\frac1{(2n+1)!}\int_{E_2}\left(\log\frac{1-t_1}{1-t_2}\right)
\left(\log\frac1{1-t_2}+\log\frac{t_2}{t_1}\right)^{2n+1}\frac{dt_1dt_2}{(1-t_1)t_2}.
$$
So we begin with its generating function
$$
G(x,y)=\int_{E_2}\left(\frac{t_1}{t_2}\right)^x(1-t_1)^{-y}(1-t_2)^{x+y}
\frac{t_1dt_2}{(1-t_1)t_2}.
$$
Indeed, our integral is equal to 
$$
\frac1{(2n+1)!}\left(\frac{\partial}{\partial x}\right)^{2n+1}\frac\partial{\partial y}
G(x,y)\Big|_{x=y=0}.
$$
We change the variables $t_1=1-u_2$, $t_2=1-u_1$,
the dual form of $G(x,y)$ is given by 
$$
\int_{E_2}\left(\frac{1-u_2}{1-u_1}\right)^xu_1^{x+y}u_2^{-y}
\frac{du_1du_2}{(1-u_1)u_2}
$$
and it can be evaluated as 
$$
\sum_{k=1}^\infty\frac1{(k+x+y)}\frac{\Gamma(k+x)^2}{\Gamma(k)\Gamma(k+2x+1)}.
$$
As
$$
\left(-\frac\partial{\partial y}\right)G(x,y)\Big|_{y=0}
=\sum^\infty_{k=1}\frac1{(k+x)^2}\frac{\Gamma(k+x)^2}{\Gamma(k)\Gamma(k+2x+1)}
$$
so our evaluation is equivalent to find the coefficient of $x^{2n+1}$ of the above
function up to the sign $(-1)^{2n+1}$. To do so we have to express the quotient of 
gamma functions as an infinite product through the infinite product formula of the 
gamma function. The procedure is
\begin{eqnarray*}
\sum^\infty_{k=1}\frac1{(k+x)^2(k+2x)}\frac{\Gamma(k+x)^2}{\Gamma(k)\Gamma(k+2x)}
&=& \sum^\infty_{k=1}\frac{k}{(k+x)^4}\frac{\Gamma(k+x+1)^2}{\Gamma(k+1)\Gamma(k+2x+1)}\\
&=& \sum^\infty_{k=1}\frac{k}{(k+x)^4}
\prod^\infty_{v=1}\frac{(1+k/v)(1+(k+2x)/v)}{(1+(k+x)/v)^2} \\
&=& \sum^\infty_{k=1}\frac{k}{(k+x)^4}
\prod^\infty_{v=1}\frac{(1+2x/(v+k))}{(1+x/(v+k))^2}\\
&=& \sum^\infty_{k=1}\frac{k}{(k+x)^4}
\prod^\infty_{v=1}\left\{1-\frac{x^2}{(v+k)^2}\left(1+\frac{x}{v+k}\right)^{-2}\right\}.
\end{eqnarray*}
From the final infinite product its coefficient of $(-x)^{2n+1}$ is given by
$$
{2n+4\choose 3}\zeta(2n+4)+\sum^n_{j=1}(-1)^j\sum_{|c_j|=2n+1-2j}
\zeta(c_{j0}+3,c_{j1}+2,\ldots,c_{jj}+2)W(c_j).
$$
\end{proof}
\section{Evaluations of $\zeta^\star(r+2,\{2\}^n)$}
In this section we give the evaluations of $\zeta^\star(r+2,\{2\}^n)$,
for $r=0$, $1$, and $2$ using the formula in Proposition \ref{pro3.3}.

Firstly, let $r=0$. Then we have
\begin{eqnarray*}
\zeta^\star(2,\{2\}^n)&=&\sum_{p+q=2n}(-1)^qG_{q+2}(p,0) \\
&=&\sum_{p+q=2n}(-1)^q\zeta(\{1\}^p,q+2).
\end{eqnarray*}
This is exactly the even case in Proposition \ref{pro4.2}.

Secondly, let $r=1$. Then we have
$$
\zeta^\star(3,\{2\}^n) =H_0+H_1,
$$
where 
\begin{eqnarray*}
H_0 &=& \sum_{p+q=2n}(-1)^{q+1}(p+1)G_{q+2}(p+1,0),
\qquad\mbox{and}\\
H_1&=& \sum_{p+q=2n}(-1)^qG_{q+2}(p,1).
\end{eqnarray*}
Since $G_{q+2}(p,0)=\zeta(\{1\}^p,q+2)$, $H_0$ is the negative of 
the even case in Proposition \ref{pro4.3}.
$$
H_0=-\sum_{a+b=n}\zeta^\star(\{2\}^a,3,\{2\}^b).
$$
On the other hand, we use a similar method as the proof of Proposition 
\ref{pro5.1} to treat $H_1$.
\begin{eqnarray*}
H_1&=&\sum^{2n}_{q=0}G_{q+2}(0,2n+1-q)
-\sum^{2n}_{q=1}\sum^{q-1}_{a=0}
(-1)^{q-1-a}\zeta(a+2)\zeta(\{1\}^{2n-q},q+1-a).
\end{eqnarray*}
After a suitable change the variable of indices and their orders,
we can reach the following  
$$
\sum_{p+q=2n+1\atop{p\geq 1}}\zeta^\star(\{1\}^p,q+2)
-\sum^{2n-1}_{\ell=0}\sum^{\ell}_{r=0}
(-1)^{r+\ell}\zeta(2n+1-\ell)\zeta(\{1\}^{r},\ell+2-r).
$$
By Proposition \ref{pro4.2} we have
$$
H_1
=\sum_{p+q=2n+1}\zeta^\star(\{1\}^p,q+2)
-\sum^n_{a=0}\zeta^\star(\{2\}^a)\zeta(2n+3-2a).
$$
We use Eq.\,(\ref{eq4.6}) to write the first term as 
a Riemann zeta value times a constant and 
apply Proposition \ref{pro4.1} to the second term, then we have
$$
H_1=
2(2n+2)\left(1-\frac1{2^{2n+2}}\right)\zeta(2n+3)
-\sum_{a+b=n}\zeta^\star(\{2\}^a,3,\{2\}^b).
$$
Hence we conclude the value of $\zeta^\star(3,\{2\}^n)$ as
$$
\zeta^\star(3,\{2\}^n)=
2(2n+2)\left(1-\frac1{2^{2n+2}}\right)\zeta(2n+3)
-2\sum_{a+b=n}\zeta^\star(\{2\}^a,3,\{2\}^b).
$$
Note that the form $\zeta^\star(3,\{2\}^n)$ is the special 
case in the last sum with $a=0$, $b=n$. Thus we can collect 
them together and give a beautiful sum formula which was first appeared in \cite[Theorem 2]{IKOO}
and then Zagier regained it in \cite{Zag}.
\begin{equation}\label{eq6.1} 
\sum_{a+b=n}\left(2+\delta_{0a}\right)\zeta^\star(\{2\}^a,3,\{2\}^b)=
2(2n+2)\left(1-\frac1{2^{2n+2}}\right)\zeta(2n+3).
\end{equation}

Thirdly, let $r=2$. Then we have
$$
\zeta^\star(4,\{2\}^n)=A_0-A_1+A_2,
$$
where 
\begin{eqnarray*}
A_0 &=& \sum_{p+q=2n}(-1)^q{p+2\choose 2}\zeta(\{1\}^{p+2},q+2),\\
A_1 &=& \sum_{p+q=2n}(-1)^q(p+1)G_{q+2}(p+1,1),\qquad\mbox{and}\\
A_2 &=& \sum_{p+q=2n}(-1)^qG_{q+2}(p,2).
\end{eqnarray*}
Proposition \ref{pro5.2} gives an evaluation of $A_1$ and makes it to be
\begin{eqnarray*}
\lefteqn{A_1=\sum_{p+q=2n}(p+1)\zeta^\star(\{1\}^{p+2},q+2)} \\
&&-\sum_{a+b=n}b\cdot\zeta(2a+2)\zeta^\star(\{2\}^{b+1})
-\sum_{a+b+c=n-1}\zeta^\star(\{2\}^a,3,\{2\}^b)\zeta(2c+3).
\end{eqnarray*}
We decompose the first term as
\begin{eqnarray*}
\lefteqn{\sum_{p+q=2n}(p+1)\zeta^\star(\{1\}^{p+2},q+2)}\\
&=&\sum_{p+q=2n+1}(p+1)\zeta^\star(\{1\}^{p+1},q+2)
-\sum_{p+q=2n+2}\zeta^\star(\{1\}^p,q+2)+\zeta(2n+4).
\end{eqnarray*}
Therefore $A_1$ beomes
\begin{eqnarray*}
\lefteqn{A_1=\sum_{p+q=2n+1}(p+1)\zeta^\star(\{1\}^{p+1},q+2)
-\sum_{p+q=2n+2}\zeta^\star(\{1\}^p,q+2)+\zeta(2n+4)}\\
&&-\sum_{a+b=n}b\cdot\zeta(2a+2)\zeta^\star(\{2\}^{b+1})
-\sum_{a+b+c=n-1}\zeta^\star(\{2\}^a,3,\{2\}^b)\zeta(2c+3).
\end{eqnarray*}
The value of $A_2$ is calculated in Proposition \ref{pro5.1}:
$$
A_2=\sum_{p+q=2n+1}G_{q+2}(1,p)
-\sum_{a+b=n}\zeta(1,2a+3)\zeta^\star(\{2\}^b).
$$
We put the first terms of $A_1$ and $A_2$ together, we have
$$
\sum_{p+q=2n+1}G_{q+2}(1,p)-
\sum_{p+q=2n+1}(p+1)\zeta^\star(\{1\}^{p+1},q+2).
$$
This value is just the left hand side of the identity in Theorem \ref{thm5.3}.
Applying the result of Theorem \ref{thm5.3} and Proposition \ref{pro4.4}, 
we get the final form of $\zeta^\star(4,\{2\}^n)$ as
\begin{eqnarray*}
\zeta^\star(4,\{2\}^n) &=&
-{2n+4\choose 3}\zeta(2n+4)+2(2n+3)\left(1-\frac1{2^{2n+3}}\right)\zeta(2n+4)-\zeta(2n+4)\\
&&+\sum^n_{j=1}(-1)^{j+1}\sum_{|c_j|=2n+1-2j}
\zeta(c_{j0}+3,c_{j1}+2,\ldots,c_{jj}+2)W(c_j)\\
&&+\sum_{p+q=2n}(-1)^q{p+2\choose 2}\zeta(\{1\}^{p+2},q+2)
-\sum_{a+b=n}\zeta(1,2a+3)\zeta^\star(\{2\}^b)\\
&&+\sum_{a+b=n}b \zeta(2a+2)\zeta^\star(\{2\}^{b+1})
+\sum_{a+b+c=n-1}\zeta^\star(\{2\}^a,3,\{2\}^b)\zeta(2c+3).
\end{eqnarray*}
%
%
\section*{Acknowledgements}
Chen was funded by the Ministry of Science and Technology, Taiwan, Republic of
China, through grant MOST 106-2115-M-845-001.
%
%


\begin{thebibliography}{99}

\bibitem{AO}
T.~Aoki, Y.~Ohno,
Sum relations for multiple zeta values and connection formulas for the Gauss 
hypergeometric functions,
\textsl{Publ. RIMS, Kyoto Univ.},
41 (2005), 329--337.

\bibitem{Chen}
K.-W.~Chen,
Generalized harmonic numbers and Euler sums,
\textsl{Int. J. Number Theory},
13 (2) (2017), 513--528. DOI:10.1142/S1793042116500883.

\bibitem{CCE610}
K.-W.~Chen, C.-L.~Chung, M.~Eie,
Sum formulas and duality theorems of multiple zeta values,
\textsl{J. Number Theory},
158 (2016), 33--53.

\bibitem{CCE618}
K.-W.~Chen, C.-L.~Chung, M.~Eie,
Combinatorial implications of decomposition theorems of multiple zeta values,
\textsl{J. Comb. Number Theory},
7 (2) (2016), 111--129.

\bibitem{Choi}
J.~Choi,
Summation formulas involving binomial coefficients, harmonic numbers, 
and generalized harmonic numbers,
\textsl{Abst. Appl. Anal.},
2014 (2014), Article ID 501906, 10 pages.

\bibitem{CC1}
M.-A.~Coppo, B.~Candelpergher,
The Arakawa-Kaneko zeta function,
\textsl{Ramanujan J.},
22 (2) (2010), 153--162. DOI: 10.1007/S11139-009-9205-X.

\bibitem{CC2}
M.-A.~Coppo, B.~Candelpergher,
Inverse binomial series and values of Arakawa-Kaneko zeta functions,
\textsl{J. Number Theory},
150 (2015), 98--119.

\bibitem{EieB}
M.~Eie,
\textsl{Topics in Number Theory},
Monographs in Number Theory, vol. 2, World Scientific, Singapore, 2009.

\bibitem{Hoff0}
M.~E.~Hoffman,
Multiple harmonic series,
\textsl{Pac. J. Math.},
152 (1992), 275--290.

\bibitem{Hoff}
M.~E.~Hoffman,
Harmonic-number summation identities,
symmetric functions, and multiple zeta values,
\text{Ramanujan J.},
42 (2017), 501--526.

\bibitem{IKOO}
K.~Ihara, J.~Kajikawa, Y.~Ohno, J.-I.~Okuda,
Multiple zeta values vs. multiple zeta-star values,
\textsl{J. Algebra},
332 (2011), 187--208.

\bibitem{Mun}
S.~Muneta,
Algebraic setup of non-strict multiple zeta values,
\textsl{Acta Arith.},
136 (2009), 7--18.

\bibitem{Zag0}
D.~Zagier,
Values of zeta functions and their applications,
\textsl{In: First European Congres of Mathematics,
vol. II, Paris, 1992, Progr. Math. 120. Birkhuser, Basel 1994},
(1992), 497--512.

\bibitem{Zag}
D.~Zagier, 
Evaluation of the multiple zeta value $\zeta(2,\ldots,2,3,2,\ldots,2)$,
\textsl{Annals of Mathematics},
175 (2012), 977-1000.

\bibitem{Zhao}
J.~Zhao,
\textsl{Multiple Zeta Functions, Multiple Polylogarithms and Their 
Special Values},
Series on Number Theory and Its Applications,
World Scientific Publishing Co. Pte. Ltd, 2016.
\end{thebibliography}
\end{document}